\UseRawInputEncoding
\documentclass[11pt,twoside, leqno]{article}

\usepackage{mathrsfs}
\usepackage{amssymb}
\usepackage{amsmath}
\usepackage{amsthm}
\usepackage{amsfonts}

\usepackage[colorlinks=true, pdfstartview=FitV, linkcolor=blue, citecolor=blue, urlcolor=blue]{hyperref}
\usepackage{color}

\pagestyle{myheadings}\markboth{\footnotesize\rm\sc Wenhua Wang}
{\footnotesize\rm\sc Anisotropic Mixed-norm Hardy Spaces}

\allowdisplaybreaks

\textwidth=15cm
\textheight=21cm
\oddsidemargin 0.35cm
\evensidemargin 0.35cm

\parindent=13pt

\def\rr{{\mathbb R}}
\def\rn{{{\rr}^n}}
\def\zz{{\mathbb Z}}

\def\nn{{\mathbb N}}
\def\fz{\infty}

\def\ccc{{\mathbb C}}

\def\cs{{\mathcal S}}

\def\az{\alpha}

\renewcommand\tilde{\widetilde}

\def\supp{{\rm{\,supp\,}}}

\def\ls{\lesssim}
\def\lz{\lambda}

\def\ez{\varepsilon}

\def\sz{\sigma}

\def\hs{\hspace{0.3cm}}

\def\r{\right}
\def\lf{\left}

\def\bint{{\ifinner\rlap{\bf\kern.30em--}
\int\else\rlap{\bf\kern.35em--}\int\fi}\ignorespaces}

\def\sbint{{\ifinner\rlap{\bf\kern.32em--}
\hspace{0.078cm}\int\else\rlap{\bf\kern.45em--}\int\fi}\ignorespaces}

\def\dsup{\displaystyle\sup}

\newtheorem{theorem}{Theorem}[section]
\newtheorem{lemma}[theorem]{Lemma}

\theoremstyle{definition}
\newtheorem{remark}[theorem]{Remark}
\newtheorem{definition}[theorem]{Definition}
\numberwithin{equation}{section}

\numberwithin{equation}{section}



\numberwithin{equation}{section}

\begin{document}

\arraycolsep=1pt

\title{\Large\bf  The Characterizations of Anisotropic Mixed-Norm Hardy Spaces on $\mathbb{R}^n$ by Atoms and Molecules \footnotetext{\hspace{-0.35cm} {\it 2010 Mathematics Subject Classification}.
{Primary 42B20; Secondary 42B30, 46E30.}
\endgraf{\it Key words and phrases.} Anisotropy, Hardy space, molecule, atom.
}}
\author{Wenhua Wang and Aiting Wang}
\date{  }
\maketitle

\vspace{-0.8cm}

\begin{center}
\begin{minipage}{13cm}\small
{\noindent{\bf Abstract} \
Let $\vec{p}\in(0,\,\infty)^n$, $A$ be an expansive dilation on $\mathbb{R}^n$,
 and $H^{\vec{p}}_A({\mathbb {R}}^n)$ be the anisotropic mixed-norm Hardy space defined via the non-tangential grand
maximal function studied by \cite{hlyy20}. In this paper, the authors establish
new atomic and molecular decompositions of $H^{\vec{p}}_A({\mathbb {R}}^n)$.
As an application, the authors obtain
a boundedness criterion for a class of linear operators
from $H^{\vec{p}}_{A}(\mathbb{R}^n)$ to $H^{\vec{p}}_{A}(\mathbb{R}^n)$.
Part of results are still new even in the classical isotropic setting (in the case $A:=2\mathrm I_{n\times n}$, ${\mathrm{I}}_{n\times n}$ denotes the $n\times n$ unit matrix).}
\end{minipage}
\end{center}

\section{Introduction}
\hskip\parindent
The theory of Hardy spaces on the Euclidean space $\rn$ has been developed and plays an important role in various fields of analysis and partial differential equations; see, for examples, \cite{clr06,dz20,fs72,s60,t17,zy16}.
On the other hand,
the mixed-norm Lebesgue space $L^{\vec{p}}(\rn)$, with the exponent vector $\vec{p}\in (0, \,\infty]^n$, is a natural generalization of the classical Lebesgue space
$L^p(\rn)$, via replacing the constant exponent $p$ by an exponent vector $\vec{p}$. The study of mixed-norm Lebesgue spaces originates from Benedek and Panzone \cite{bp61}. Later on, function spaces in mixed-norm setting have attracted considerable
attention and have rapidly been developed; see, for instance,
\cite{cg20,cs20,cs21,hwyy21}.

Let vector $\vec{p}:=(p_1,\ldots,p_n)\in (0, \,\infty]^n$. Recently, Cleanthous et al. \cite{cgn17} introduced
the anisotropic mixed-norm Hardy space $H_{\vec{a}}^{\vec{p}}({\mathbb {R}}^n)$, where $\vec{a}:=(a_1,\ldots,a_n)\in[1,\,\infty)^n$, via the non-tangential grand maximal function, and then obtained its maximal function characterizations. Not long afterward, Huang et al. \cite{hlyy19} further completed some real-variable characterizations of the space, such as the characterizations in terms of the atomic characterization and the Littlewood-Paley function characterization. Moreover, they obtained the boundedness of $\delta$-type Calder\'on-Zygmund operators from $H^{\vec{p}}_{\vec{a}}(\rn)$ to $L^{\vec{p}}(\rn)$ or from $H^{\vec{p}}_{\vec{a}}(\rn)$ to itself. For more information about this space, see \cite{hcy21,hlyy19y,hy21}

Very recently, Huang et al. \cite{hlyy20}
also introduced the new anisotropic mixed-norm Hardy space $H^{\vec{p}}_A({\mathbb {R}}^n)$ associated with a general expansive matrix $A$, via the non-tangential grand maximal function, and then established its various real-variable characterizations of $H_A^{\vec{p}}$, respectively, in terms of the atomic characterization and  the Littlewood-Paley function characterization. For more information about Hardy space, see \cite{hlyy20,hlyy20x,s12,t19,wl12}.   Nevertheless, the molecular decompositions of $H^{\vec{p}}_A({\mathbb {R}}^n)$ has
not been established till now. Once its molecular decomposition is established,
it can be conveniently used to prove the boundedness of many important operators on the space
$H^{\vec{p}}_A({\mathbb {R}}^n)$, for example, one of the most famous operator in harmonic analysis, Calder\'on-Zygmund operators.
 To complete the theory of the new anisotropic mixed-norm Hardy space $H^{\vec{p}}_A({\mathbb {R}}^n)$, in this article, we establish the molecular decompositions of $H^{\vec{p}}_A({\mathbb {R}}^n)$. Then, as application, we further obtain
a boundedness criterion for a class of linear operators
from $H^{\vec{p}}_{A}(\mathbb{R}^n)$ to $H^{\vec{p}}_{A}(\mathbb{R}^n)$.

Precisely, this article is organized as follows.

In Section \ref{s2}, we first recall some notations and definitions
concerning expansive dilations, the mixed-norm Lebesgue space $L^{\vec{p}}(\rn)$ and the anisotropic mixed-norm Hardy space $H^{\vec{p}}_A({\mathbb {R}}^n)$, via the non-tangential grand maximal function.

The aim of Section \ref{s3} is  to establish a new atomic characterization of $H^{\vec{p}}_A({\mathbb {R}}^n)$.

In Section \ref{s4},
 motivated by Liu et al. \cite{lyy17x,lyy16} and Huang et al. \cite{hlyy20}, we introduce the anisotropic mixed-norm molecular Hardy space $H^{\vec{p},\,q,\,s,\,\varepsilon}_{A,\,\mathrm{mol}}(\rn)$
and establish its equivalence with $H^{\vec{p}}_{A}(\rn)$ in Theorem \ref{t2.6}.
When it comes back to the isotropic setting, i.e., $A:=2{\rm I}_{n\times n}$,
${\rm I}_{n\times n}$ denotes the $n\times n$ unit matrix,
this result is still new, see Remark \ref{r2.7} for more details.
 It is worth pointing
out that some of the proof methods of the molecular characterization of $H^{p}_{A}(\rn)=H^{p,\,p}_{A}(\rn)$ (\cite[Theorem 3.9]{lyy16}) don't work anymore in the present setting.
For example, we search out some estimates related to $L^{\vec{p}}(\rn)$ norms for some series of functions which can be reduced into dealing with the $L^q(\rn)$ norms of the corresponding functions (see Lemma \ref{l4.1x}). Then, by using this key lemma and the Fefferman-Stein vector-valued inequality of the Hardy-Littlewood maximal operator $M_{\mathrm{HL}}$ on $L^{\vec{p}}(\rn)$ (see Lemma \ref{l3.5}), we prove
 their equivalences with $H^{\vec{p}}_{A}(\rn)$ and $H^{\vec{p},\,q,\,s,\,\varepsilon}_{A,\,\mathrm{mol}}(\rn)$.

In Section \ref{s5}, as an application of the molecular characterization of $H^{\vec{p}}_{A}(\rn)$, we obtain a boundedness criterion for a class of linear operators
from $H^{\vec{p}}_{A}(\mathbb{R}^n)$ to $H^{\vec{p}}_{A}(\mathbb{R}^n)$ (see Theorem \ref{t4.2} below). Particularly, when it comes back to the isotropic setting, i.e., $A:=2{\rm I}_{n\times n}$,
${\rm I}_{n\times n}$ denotes the $n\times n$ unit matrix, this result is also new.

Finally, we make some conventions on notation.
Let $\nn:=\{1,\, 2,\,\ldots\}$ and $\zz_+:=\{0\}\cup\nn$.
For any $\az:=(\az_1,\ldots,\az_n)\in\zz_+^n:=(\zz_+)^n$, let
$|\az|:=\az_1+\cdots+\az_n$ and
$\partial^\az:=
\lf(\frac{\partial}{\partial x_1}\r)^{\az_1}\cdots
\lf(\frac{\partial}{\partial x_n}\r)^{\az_n}.$
Throughout the whole paper, we denote by $C$ a \emph{positive
constant} which is independent of the main parameters, but it may
vary from line to line.  For any $q\in[1,\,\infty]$, we denote by $q'$ its conjugate index, namely, $1/q + 1/{q'}=1$.
For any $a\in\rr$, $\lfloor a\rfloor$ denotes the
\emph{maximal integer} not larger than $a$.
The \emph{symbol} $D\ls F$ means that $D\le
CF$. If $D\ls F$ and $F\ls D$, we then write $D\sim F$.
If $E$ is a
subset of $\rn$, we denote by $\chi_E$ its \emph{characteristic
function}. If there are no special instructions, any space $\mathcal{X}(\rn)$ is denoted simply by $\mathcal{X}$. For instance, $L^2(\rn)$ is simply denoted by $L^2$. The symbol $C^{\infty}$ denotes the set of all {\it infinitely differentiable functions} on $\rn$. Denote by $\cs$   the \emph{space of all Schwartz functions} and $\cs'$
its \emph{dual space} (namely, the \emph{space of all tempered distributions}).



\section{Preliminary \label{s2}}
\hskip\parindent
In this section, we first recall the notion of anisotropic mixed-norm Hardy space $H^{\vec{p}}_{A}$, via the non-tangential grand maximal function $M_N(f)$, and then given its molecular decomposition.

We begin with recalling the notion of {{expansive dilations}}
on $\rn$; see \cite[p.\,5]{b03}. A real $n\times n$ matrix $A$ is called an {\it
expansive dilation}, shortly a {\it dilation}, if
$\min_{\lz\in\sz(A)}|\lz|>1$, where $\sz(A)$ denotes the set of
all {\it eigenvalues} of $A$. Let $\lz_-$ and $\lz_+$ be two {\it positive numbers} such that
$$1<\lz_-<\min\{|\lz|:\ \lz\in\sz(A)\}\le\max\{|\lz|:\
\lz\in\sz(A)\}<\lz_+.$$ Then there exists a positive constant $C$, such that, for any $x\in\rn$, when $j\in\zz_+$,
\begin{eqnarray}
&&C^{-1}(\lambda_-)^j|x|\leq|A^jx|\leq C(\lambda_+)^j|x|\label{e2.3v}
\end{eqnarray}
and, when $j\in\zz\backslash\zz_+$,
\begin{eqnarray}
&&C^{-1}(\lambda_+)^j|x|\leq|A^jx|\leq C(\lambda_-)^j|x|.\label{e2.4v}
\end{eqnarray}

From \cite[p.\,5, Lemma 2.2]{b03} that, for a fixed dilation $A$,
there exist a number $r\in(1,\,\fz)$ and a set $\Delta:=\{x\in\rn:\,|Px|<1\}$, where $P$ is some non-degenerate $n\times n$ matrix, such that $$\Delta\subset r\Delta\subset A\Delta,$$ and we may
assume that $|\Delta|=1$, where $|\Delta|$ denotes the
$n$-dimensional Lebesgue measure of the set $\Delta$. Let
$B_k:=A^k\Delta$ for $k\in \zz.$ Then $B_k$ is open, $B_k\subset
rB_k\subset B_{k+1}$ and $|B_k|=b^k$, here and hereafter, $b:=|\det A|$.
An ellipsoid $x+B_k$ for some $x\in\rn$ and $k\in\zz$ is called a {\it dilated ball}.
Denote by $\mathfrak{B}$ the set of all such dilated balls, namely,
\begin{eqnarray}\label{e2.1}
\mathfrak{B}:=\{x+B_k:\ x\in \rn,\,k\in\zz\}.
\end{eqnarray}
Throughout the whole paper, let $\sigma$ be the {\it smallest integer} such that $2B_0\subset A^\sigma B_0$
and, for any subset $E$ of $\rn$, let $E^\complement:=\rn\setminus E$. Then,
for all $k,\,j\in\zz$ with $k\le j$, it holds true that
\begin{eqnarray}
&&B_k+B_j\subset B_{j+\sz},\label{e2.3}\\
&&B_k+(B_{k+\sz})^\complement\subset
(B_k)^\complement,\label{e2.4}
\end{eqnarray}
where $E+F$ denotes the {\it algebraic sum} $\{x+y:\ x\in E,\,y\in F\}$
of  sets $E,\, F\subset \rn$.

\begin{definition}
 A \textit{quasi-norm}, associated with
dilation $A$, is a Borel measurable mapping
$\rho_{A}:\rr^{n}\to [0,\infty)$, for simplicity, denoted by
$\rho$, satisfying
\begin{enumerate}
\item[\rm{(i)}] $\rho(x)>0$ for all $x \in \rn\setminus\{ \mathbf{0}\}$,
here and hereafter, $\mathbf{0}$ denotes the origin of $\rn$;

\item[\rm{(ii)}] $\rho(Ax)= b\rho(x)$ for all $x\in \rr^{n}$, where, as above, $b:=|\det A|$;

\item[\rm{(iii)}] $ \rho(x+y)\le C_A\lf[\rho(x)+\rho(y)\r]$ for
all $x,\, y\in \rr^{n}$, where $C_A\in[1,\,\fz)$ is a constant independent of $x$ and $y$.
\end{enumerate}
\end{definition}

In the standard dyadic case $A:=2{{\rm I}_{n\times n}}$, $\rho(x):=|x|^n$ for
all $x\in\rn$ is
an example of homogeneous quasi-norms associated with $A$, here and hereafter, ${\rm I}_{n\times n}$ denotes the $n\times n$ {\it unit matrix},
$|\cdot|$ always denotes the {\it Euclidean norm} in $\rn$.

By \cite[Lemma 2.4]{b03}, we see
that all homogeneous quasi-norms associated with a given dilation
$A$ are equivalent. Therefore, for a
given dilation $A$, in what follows, for simplicity, we
always use the {\it{step homogeneous quasi-norm}} $\rho$ defined by setting,  for all $x\in\rn$,
\begin{equation*}
\rho(x):=\sum_{k\in\zz}b^k\chi_{B_{k+1}\setminus B_k}(x)\ {\rm
if} \ x\ne \mathbf{0},\hs {\mathrm {or\ else}\hs } \rho(\mathbf{0}):=0.
\end{equation*}
By \eqref{e2.3}, we know that, for all $x,\,y\in\rn$,
$$\rho(x+y)\le
b^\sz\lf(\max\lf\{\rho(x),\,\rho(y)\r\}\r)\le b^\sz[\rho(x)+\rho(y)].$$

Now we recall the definition of mixed-norm Lebesgue space. Let $\vec{p}:=(p_1,\ldots,\,p_n)\in(0,\,\infty]^n$.
The {\it mixed-norm Lebesgue space}
$L^{\vec{p}}$ is defined to be the set of all measurable functions $f$ such that
$$\|f\|_{L^{\vec{p}}}:= \lf\{\int_{\mathbb{R}}\cdots\lf[\int_{\mathbb{R}}|f(x_1,\,\ldots,x_n)|
^{p_1}\,dx_1\r]^{{p_2}/{p_1}}\cdots dx_n\r\}^{{1}/{p_n}}<\infty$$
with the usual modifications made when $p_i=\infty$ for some $i \in \{1, \ldots, n\}$.

For any $\vec{p}:=(p_1,\ldots,\,p_n)\in(0,\,\infty]^n$, let
\begin{eqnarray}\label{e2.5}
&&p_- :=\min \{p_1,\,\ldots,p_n\}\quad \mathrm{and} \ \ p_+ :=\max \{p_1,\,\ldots,p_n\}.
\end{eqnarray}

\begin{lemma}\label{r2.1}{\rm\cite[Lemma 3.4]{hlyy20}}
Let $\vec{p}\in(0,\,\infty]^n$.
 Then, for any
$r\in(0,\,\infty)$ and $f\in L^{\vec{p}}$,
$$\lf\|{|f|^r}\r\|_{L^{\vec{p}}}=\lf\|f\r\|^r_{L^{r\vec{p}}}.$$
In addition, for any $\mu\in\ccc$, $\gamma\in [0,\,\min\{1,\,p_-\}]$ and $f,g\in L^{\vec{p}}$,
$\lf\|\mu f\r\|_{L^{\vec{p}}}=|\mu|\lf\|f\r\|_{L^{\vec{p}}}$ and
$$\lf\|f+g\r\|^{\gamma}_{L^{\vec{p}}}\leq\lf\|f\r\|^{\gamma}_{L^{\vec{p}}}+
\lf\|g\r\|^{\gamma}_{L^{\vec{p}}},$$
here and hereafter, for any $\alpha\in\mathbb{R}$, $\alpha\vec{p}:=(\alpha p_1,\ldots,\alpha p_n)$ and
\begin{align}\label{e2.5.1}
\underline{p}:=\min\{p_-,\,1\}
\end{align}
with $p_-$ as in \eqref{e2.5}.
\end{lemma}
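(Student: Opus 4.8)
The three claims split into two easy homogeneity identities and one quasi-triangle inequality, and the plan is to reduce each to the one-dimensional case and then iterate the mixed norm coordinate by coordinate.

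The plan is to first dispose of the two scaling identities. For the identity $\||f|^r\|_{L^{\vec p}}=\|f\|^r_{L^{r\vec p}}$, I would unravel the definition of the mixed norm applied to $|f|^r$: the innermost integral is $\int_{\rr}|f|^{rp_1}\,dx_1$, and after raising to the power $1/p_1$ and iterating, each exponent $p_i$ in the defining nested integrals for $|f|^r$ plays exactly the role that $rp_i$ plays in the definition of $\|\cdot\|_{L^{r\vec p}}$. Tracking the outermost exponent, one sees the whole expression equals $\bigl(\|f\|_{L^{r\vec p}}\bigr)^{r}$, which is the assertion; the usual modifications handle any coordinate where $p_i=\infty$. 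The absolute-homogeneity $\|\mu f\|_{L^{\vec p}}=|\mu|\,\|f\|_{L^{\vec p}}$ is immediate, since $|\mu|$ factors out of every integral and the nested exponents multiply back to a single power $1$ on $|\mu|$.

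For the quasi-triangle inequality $\|f+g\|^{\gamma}_{L^{\vec p}}\le\|f\|^{\gamma}_{L^{\vec p}}+\|g\|^{\gamma}_{L^{\vec p}}$ with $\gamma\in(0,\min\{1,p_-\}]$, the key point is that for any exponent $p\in(0,\infty]$ and any $\gamma\in(0,\min\{1,p\}]$, the functional $h\mapsto\|h\|^{\gamma}_{L^p(\rr,\,dx_1)}$ is subadditive: when $p\ge1$ this follows from Minkowski's inequality in $L^p$ together with $\gamma\le1$ and the elementary inequality $(a+b)^{\gamma}\le a^{\gamma}+b^{\gamma}$ for $a,b\ge0$, while when $p<1$ one uses the well-known inequality $\|h_1+h_2\|^{p}_{L^p}\le\|h_1\|^{p}_{L^p}+\|h_2\|^{p}_{L^p}$ and the fact that $\gamma\le p$. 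Since $\gamma\le p_-\le p_i$ for every coordinate $i$, this subadditivity holds in each variable simultaneously. I would then integrate outward one coordinate at a time: after establishing subadditivity of $\|\cdot\|^{\gamma}$ in $x_1$, I view the resulting function of $(x_2,\ldots,x_n)$ and apply the same one-dimensional subadditivity in $x_2$, and so on, each step using $\gamma\le p_i$. Iterating through all $n$ coordinates yields the claimed inequality for the full mixed norm.

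The main obstacle is organizing the induction cleanly so that the per-coordinate subadditivity composes correctly through the nested structure, since the intermediate quantities are themselves mixed norms in fewer variables and one must keep track of which exponent governs each stage; the case distinction $p_i\ge1$ versus $p_i<1$ must be handled uniformly across coordinates, and the modifications at coordinates with $p_i=\infty$ (where the integral becomes an essential supremum and subadditivity is trivial) need to be folded in. Once the single-variable subadditivity of $\|\cdot\|^{\gamma}$ is in hand for every admissible $(p,\gamma)$, however, the iteration is routine, so I expect the bulk of the work to lie in stating and verifying that one-dimensional building block.
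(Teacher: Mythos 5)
The paper never actually proves this lemma: it is quoted as \cite[Lemma 3.4]{hlyy20} and used as a black box, so there is no internal argument to compare yours against. Your direct verification is the natural one and is correct in substance. The two homogeneity assertions are exactly as you say: in the nested integrals defining $\||f|^r\|_{L^{\vec p}}$ the intermediate exponents $rp_{i+1}/(rp_i)=p_{i+1}/p_i$ agree with those in the definition of $\|f\|_{L^{r\vec p}}$, only the outermost exponent differs by the factor $r$, and coordinates with $p_i=\infty$ cause no trouble; homogeneity under scalar multiplication is immediate.

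The one step that must be organized more carefully than your sketch suggests is the iteration of the $\gamma$-triangle inequality, and you correctly flagged it as the main obstacle. After the innermost coordinate you have the pointwise bound $H^{\gamma}\le F^{\gamma}+G^{\gamma}$, where $H:=\|(f+g)(\cdot,x_2,\dots,x_n)\|_{L^{p_1}(dx_1)}$ and $F,G$ are the analogous quantities for $f,g$. At the next coordinate you cannot literally ``apply the same one-dimensional subadditivity'', because $H$ is dominated by $(F^{\gamma}+G^{\gamma})^{1/\gamma}$, which is not a sum of two functions; and you cannot replace it by $F+G$, since for nonnegative reals and $\gamma\le 1$ one has $(a+b)^{1/\gamma}\ge a^{1/\gamma}+b^{1/\gamma}$, so the elementary inequality runs the wrong way. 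The standard repair is supplied by your own first part: by monotonicity of the mixed norm and the scaling identity, $\|(F^{\gamma}+G^{\gamma})^{1/\gamma}\|_{L^{(p_2,\dots,p_n)}}=\|F^{\gamma}+G^{\gamma}\|^{1/\gamma}_{L^{(p_2/\gamma,\dots,p_n/\gamma)}}$, and since every exponent $p_i/\gamma$ is at least $1$, the genuine Minkowski inequality applies in that rescaled mixed norm; Minkowski does iterate coordinate-by-coordinate without any trick, because the sum structure $F^{\gamma}+G^{\gamma}$ is preserved at every stage. In other words, the induction on coordinates is carried by the triangle inequality at exponents $p_i/\gamma\ge 1$ applied to the $\gamma$-th powers, not by the $\gamma$-power subadditivity itself; raising the final estimate to the power $\gamma$ gives the claim. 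With this rescaling step inserted your proof is complete; the endpoint cases ($\gamma=0$, and essential suprema at $p_i=\infty$) are trivial, as you note.
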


A $C^\infty$ function $\varphi$ is said to belong to the Schwartz class $\cs$ if,
for every integer $\ell\in\zz_+$ and multi-index $\alpha$,
$\|\varphi\|_{\alpha,\ell}:=\dsup_{x\in\rn}[\rho(x)]^\ell|\partial^\az\varphi(x)|<\infty$.
The dual space of $\cs$, namely, the space of all tempered distributions on $\rn$ equipped with the weak-$\ast$
topology, is denoted by $\cs'$. For any $N\in\zz_+$, let
\begin{eqnarray*}
\cs_N:=\lf\{\varphi\in\cs:\ \|\varphi\|_{\alpha,\ell}\leq1,\ |\alpha|\leq N,\ \ \ell\leq N\r\}.
\end{eqnarray*}
In what follows, for $\varphi\in \cs$, $k\in\zz$ and $x\in\rn$, let $\varphi_k(x):= b^{-k}\varphi\lf(A^{-k}x\r)$.

\begin{definition}
Let $\varphi\in \cs$ and $f\in \cs'$. The{\it{ non-tangential maximal function}} $M_{\varphi}(f)$ with respect to $\varphi$ is defined by setting,
for any $x\in\rn$,
\begin{eqnarray*}
M_{\varphi}(f)(x):=\sup_{y\in x+B_k, k\in\zz}|f*\varphi_{k}(y)|.
\end{eqnarray*}
The $\it{radial\ maximal\ function}$ $M^0_\varphi(f)$ with respect to $\varphi$ is defined by
setting, for any $x\in\rn$,
$$M^0_\varphi(f)(x):=\sup_{k\in\zz}|f\ast\varphi_k(x)|.$$
Moreover, for any given $N\in \nn$, the {\it non-tangential grand maximal function} $M_{N}(f)$ of $f\in \cs'$ is defined by setting,
for any $x\in\rn$,
\begin{eqnarray*}
M_{N}(f)(x):=\sup_{\varphi\in \cs_N}M_{\varphi}(f)(x).
\end{eqnarray*}
The {\it radial grand maximal function} $M^0_{N}(f)(x)$ of $f\in\cs'$ is defined by setting, for any $x\in\rn$,
$$M^0_{N}(f)(x):=\sup_{\varphi\in\mathcal{S}_{N}}M^0_\varphi (f)(x).$$
\end{definition}

The following anisotropic mixed-norm Hardy space $H^{\vec{p}}_{A}$ was introduced in \cite[Definition 2.5]{hlyy20}.
\begin{definition}\label{d2.4}
Let $\vec{p}\in (0,\,\infty)^n$, $A$ be a dilation and $N\in\nn\cap[\lfloor({1/\underline{p}}-1)\ln b/\ln\lambda_{-}\rfloor+2,\,\infty)$, where $\underline{p}$
is as in \eqref{e2.5.1}. The{\it{ anisotropic mixed-norm Hardy space}} $H_{A}^{\vec{p}}$ is defined as
\begin{eqnarray*}
H_{A}^{\vec{p}}:=\lf\{f\in \cs':M_{N}(f)\in L^{\vec{p}}\r\}
\end{eqnarray*}
and, for any $f\in H_{A}^{\vec{p}}$, let $\|f\|_{H_{A}^{\vec{p}}}:=\|M_{N}(f)\|_{L^{\vec{p}}}$.
\end{definition}
\begin{remark} Let $\vec{p}\in (0,\,\infty)^n$.
\begin{enumerate}
\item[\rm{(i)}]
From \cite[Theorem 4.7]{hlyy20}, we know that the $H_{A}^{\vec{p}}$ is independent of the choice of $N$, as long as $N\in\nn\cap[\lfloor({1/\underline{p}}-1)\ln b/\ln\lambda_{-}\rfloor+2,\,\infty)$.
\item[\rm{(ii)}]  When
 $\vec{p}:=\overbrace{\{p,\,\ldots,p\}}^{n\,\mathrm{times}}$, where $p\in(0,\,\infty)$, the space $H^{\vec{p}}_{A}$ is reduced to the anisotropic Hardy $H^{p}_{A}$ studied in \cite[Definition 3.11]{b03}.
\item[\rm{(iii)}] From \cite[Proposition 4]{hlyy20}, we know that,
 when $$A:=\left(
                    \begin{array}{cccc}
                      2^{a_1} & 0 & \cdots & 0 \\
                      0 & 2^{a_2} & \cdots & 0 \\
                      \vdots & \vdots &  & \vdots \\
                      0 & 0 & \cdots & 2^{a_n} \\
                    \end{array}
                  \right)
$$
with $\vec{a}:=(a_1,\ldots,a_n)\in[1,\,\infty)^n$,
the space $H^{\vec{p}}_{A}$ is reduced to the anisotropic Hardy $H^{\vec{p}}_{\vec{a}}$ studied in \cite{cgn17,hlyy19}.
\end{enumerate}
\end{remark}
We recall
the following notion of anisotropic mixed-norm $(\vec{p},\,q,\,s)$-atoms introduced in \cite[Definition 4.1]{hlyy20}.

\begin{definition}\label{d3.1}
Let $\vec{p}\in(0,\,\infty)^n$, $q\in(1,\,\infty]$ and
$s\in[\lfloor(1/{p_-}-1) {\ln b/\ln \lambda_-}\rfloor,\,\infty)\cap\zz_+$ with $p_-$ as in \eqref{e2.5}. An {\it anisotropic mixed-norm $(\vec{p},\,q,\,s)$-atom} is a measurable function $a$ on $\rn$ satisfying
\begin{enumerate}
\item[\rm{(i)}] (support) $\supp a\subset B$, where $B\in\mathfrak{B}$ and $\mathfrak{B}$ is as in \eqref{e2.1};
\item[\rm{(ii)}] (size) $\|a\|_{L^q}\le \frac{|B|^{1/q}}{\|\chi_B\|_{L^{\vec{p}}}}$;
\item[\rm{(iii)}] (vanishing moment) $\int_\rn a(x)x^\alpha dx=0$ for any $\alpha\in \mathbb{Z}^n_+$ with $|\alpha|\leq s$.
\end{enumerate}
\end{definition}
In this paper, we call an anisotropic mixed-norm $(\vec{p},\,q,\,s)$-atom simply by
a $(\vec{p},\,q,\,s)$-atom. The following anisotropic mixed-norm atomic Hardy space
was introduced in \cite{hlyy20}.
\begin{definition}
Let $\vec{p}\in (0,\,\infty)^n$, $q\in(1,\,\infty]$, $A$ be a dilation and
$s\in[\lfloor(1/{p_-}-1) {\ln b/\ln \lambda_-}\rfloor,\,\infty)\cap\zz_+$ with $p_-$ as in \eqref{e2.5}.
 The {\it anisotropic mixed-norm atomic Hardy space}
$H^{\vec{p},\,q,\,s}_{A}$
is defined to be the set of all distributions $f\in \cs'$ satisfying that there
exist $\{\lambda_i\}_{i\in\nn}\subset\ccc$ and a sequence of
$(\vec{p},\,q,\,s)$-atoms, $\{a_i\}_{i\in\nn}$, supported, respectively,
on $\{{B^{(i)}}\}_{i\in\nn}\subset\mathfrak{B}$ such that
\begin{align*}
f=\sum_{i\in\nn} \lz_{i}a_i \ \ \mathrm{in\ } \ \cs'.
\end{align*}
Moreover, for any $f\in H^{\vec{p},\,q,\,s}_{A}$, let
$$\|f\|_{H^{\vec{p},\,q,\,s}_{A}}
:=\inf \lf\|\lf\{\sum_{i\in\nn} \lf[\frac{|\lambda_i|\chi_{{B^{(i)}}}}{\|\chi_{{B^{(i)}}}\|_{L^{\vec{p}}}}\r]
^{\eta}\r\}^{1/\eta}\r\|
_{L^{\vec{p}}},$$
where $\eta\in(0,\,\min\{1,\,p_{-}\})$ and the infimum is taken over all the decompositions of $f$ as above.
\end{definition}

\begin{lemma}\label{l3.1}{\rm\cite[Theorem 4.7]{hlyy20}}
Let $\vec{p}\in (0,\,\infty)^n$, $q\in(\max\{p_+,\,1\},\,\infty]$
with $p_+$ as in \eqref{e2.5},
$s\in[\lfloor(1/{p_-}-1) {\ln b/\ln \lambda_-}\rfloor,\,\infty)\cap\zz_+$ with $p_-$ as in \eqref{e2.5} and
$N\in\nn\cap[\lfloor(1/{\underline{p}}-1) {\ln b/\ln \lambda_-}\rfloor+2,\,\infty)$.
Then $$H^{\vec{p}}_{A}=H^{\vec{p},\,q,\,s}_{A}$$ with equivalent quasi-norms.
\end{lemma}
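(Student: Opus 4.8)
The plan is to establish the two inclusions $H^{\vec p,\,q,\,s}_A\hookrightarrow H^{\vec p}_A$ and $H^{\vec p}_A\hookrightarrow H^{\vec p,\,q,\,s}_A$ separately, each with the corresponding quasi-norm bound. The first inclusion is the softer one; the second, which must manufacture an atomic decomposition from a distribution controlled only through its grand maximal function, is where the real work lies.

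For $H^{\vec p,\,q,\,s}_A\hookrightarrow H^{\vec p}_A$, I would begin with a single $(\vec p,\,q,\,s)$-atom $a$ supported on a dilated ball $B=x_0+B_{k_0}\in\mathfrak B$ and estimate $M_N(a)$ pointwise. On a fixed dilate of $B$ one controls $M_N(a)$ through the Hardy--Littlewood maximal function, using the size condition $\|a\|_{L^q}\le|B|^{1/q}/\|\chi_B\|_{L^{\vec p}}$; for $x$ far from $B$ one Taylor-expands the test function to order $s$ and invokes the vanishing moment condition~(iii), which forces decay in $\rho(x-x_0)$. The net outcome is a pointwise bound $M_N(a)\ls\|\chi_B\|_{L^{\vec p}}^{-1}[M_{\mathrm{HL}}(\chi_B)]^{\theta}$ with $\theta:=1+(s+1)\ln\lambda_-/\ln b$, and the hypothesis $s\ge\lfloor(1/p_--1)\ln b/\ln\lambda_-\rfloor$ is precisely what yields $\theta>1/p_-$. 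Applying this atom by atom, then combining the $\eta$-subadditivity of $\|\cdot\|_{L^{\vec p}}^{\eta}$ from Lemma~\ref{r2.1} (with $\eta\in(0,\min\{1,p_-\})$) with the vector-valued Fefferman--Stein inequality for $M_{\mathrm{HL}}$ on $L^{\vec p}$ (Lemma~\ref{l3.5}), I would bound $\|M_N(f)\|_{L^{\vec p}}$ by the atomic quasi-norm; the same estimate delivers the convergence of $\sum_i\lambda_i a_i$ in $\cs'$.

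For the converse $H^{\vec p}_A\hookrightarrow H^{\vec p,\,q,\,s}_A$, the plan is to run the Calder\'on--Zygmund decomposition adapted to the dilation $A$, in the spirit of Bownik~\cite{b03}. For each $k\in\zz$ I would form the level set $\Omega_k:=\{x\in\rn:\,M_N(f)(x)>2^k\}$, take a Whitney-type covering of $\Omega_k$ by dilated balls with bounded overlap, and, through a smooth partition of unity subordinate to it, split $f=g^k+\sum_i b_i^k$ in $\cs'$ so that each bad part $b_i^k$ is supported on a controlled dilate of its Whitney ball, carries vanishing moments up to order $s$, and obeys $\|b_i^k\|_{L^\infty}\ls 2^k$. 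Telescoping the good parts, $f=\sum_k(g^{k+1}-g^k)$ in $\cs'$, and regrouping, each summand becomes a fixed multiple of a $(\vec p,\,q,\,s)$-atom $a_i^k$ supported on a dilated ball $B_i^k$, with coefficient $|\lambda_i^k|\ls 2^k\|\chi_{B_i^k}\|_{L^{\vec p}}$.

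The main obstacle is then to control the resulting atomic quasi-norm by $\|M_N(f)\|_{L^{\vec p}}=\|f\|_{H^{\vec p}_A}$. In the mixed-norm setting the usual $L^p$ layer-cake bookkeeping fails, since $\|\cdot\|_{L^{\vec p}}$ is not rearrangement friendly across the different coordinate directions. I would therefore reduce the estimate for $\|\{\sum_{i,k}(|\lambda_i^k|\chi_{B_i^k}/\|\chi_{B_i^k}\|_{L^{\vec p}})^{\eta}\}^{1/\eta}\|_{L^{\vec p}}$ to an $L^q$-type estimate for the associated series (the role of Lemma~\ref{l4.1x}); the bounded overlap of the Whitney balls at each level, the bound $|\lambda_i^k|\ls 2^k\|\chi_{B_i^k}\|_{L^{\vec p}}$, and a final application of the Fefferman--Stein inequality (Lemma~\ref{l3.5}) then return a bound of the form $\ls\sum_k 2^{k\eta}\|\chi_{\Omega_k}\|_{L^{\vec p}}^{\eta}$, which is in turn controlled by $\|M_N(f)\|_{L^{\vec p}}^{\eta}$ by the definition of $\Omega_k$. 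The delicate technical points, which force the anisotropic distributional machinery, are verifying that each $b_i^k$ genuinely has $s$ vanishing moments and that the telescoping series converges in $\cs'$.
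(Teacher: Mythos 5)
The paper itself contains no proof of this lemma: it is imported verbatim as \cite[Theorem 4.7]{hlyy20}, so the only in-paper comparison available is with the proof of its molecular analogue, Theorem \ref{t2.6}. Your overall plan --- pointwise maximal-function estimates for a single atom plus Fefferman--Stein for one inclusion, and an anisotropic Calder\'on--Zygmund/Whitney decomposition of the level sets $\Omega_k:=\{x\in\rn:\,M_N(f)(x)>2^k\}$ for the other --- is the standard route taken in the cited source, and your first inclusion mirrors what the paper does for molecules: decay of $M_N(a)$ off a dilate of the ball with exponent $\theta=1+(s+1)\ln\lambda_-/\ln b$, then Lemma \ref{l3.5} applied after raising to the power $\theta$ (so that $\theta\vec{p}\in(1,\infty)^n$; note the condition actually needed is $\theta>1/\underline{p}$, which your hypothesis gives since also $\theta>1$). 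One caveat: on the dilate of $B$ itself, the Hardy--Littlewood control you invoke cannot be closed by Lemma \ref{l3.5}, because $\vec{p}$ may have entries $\le 1$ and $M_{\mathrm{HL}}$ is then not bounded on $L^{\vec{p}}$; that near part is precisely what Lemma \ref{l4.1x} (reduction to $L^q$ estimates) is for, as in the estimate of $\mathrm{I}_1$ in the proof of Theorem \ref{t2.6}.

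The genuine gap is the final bookkeeping step of your converse inclusion. The inequality you assert, $\sum_k 2^{k\eta}\|\chi_{\Omega_k}\|_{L^{\vec{p}}}^{\eta}\lesssim\|M_N(f)\|_{L^{\vec{p}}}^{\eta}$, is false: already in the isotropic case $\vec{p}=(p,\ldots,p)$, if the level sets satisfy $|\Omega_k|=2^{-kp}k^{-2}$ for $k\ge 1$, then $\|M_N(f)\|_{L^p}^p\sim\sum_k 2^{kp}|\Omega_k|=\sum_k k^{-2}<\infty$, while $\sum_k 2^{k\eta}\|\chi_{\Omega_k}\|_{L^p}^{\eta}=\sum_k k^{-2\eta/p}=\infty$ whenever $\eta\le p/2$, a choice of $\eta\in(0,\,\min\{1,\,p_-\})$ that is certainly permitted. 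Pulling the $L^{\vec{p}}$ quasi-norm inside the sum over $k$ is itself a layer-cake manoeuvre --- the very thing you correctly observe is unavailable in mixed norms. The correct ending keeps the whole double sum inside one quasi-norm: after the bounded-overlap estimate and Lemma \ref{l3.5} (again applied after raising to a power exceeding $1/\underline{p}$) one arrives at $\lf\|\{\sum_k 2^{k\eta}\chi_{\Omega_k}\}^{1/\eta}\r\|_{L^{\vec{p}}}$, and then the purely pointwise estimate $\sum_k 2^{k\eta}\chi_{\Omega_k}(x)=\sum_{\{k:\,2^k<M_N(f)(x)\}}2^{k\eta}\lesssim [M_N(f)(x)]^{\eta}$ (a geometric series) yields the bound by $\|M_N(f)\|_{L^{\vec{p}}}$. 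No rearrangement or level-by-level summation of norms is needed, which is exactly why the argument survives in the mixed-norm setting. With that step repaired, and the near-ball part of the first inclusion rerouted through Lemma \ref{l4.1x}, your outline matches the known proof.
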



We recall the definition of anisotropic Hardy-Littlewood maximal function $M_{\mathrm{HL}}(f)$. For any $f\in L_{\mathrm{loc}}^1$ and $x\in \rn$,
\begin{align}\label{e2.9}
M_{\mathrm{HL}}(f)(x):=\sup_{x\in B\in\mathfrak{B}}\frac{1}{|B|}\int_{B}|f(z)|\,dz,
\end{align}
where $\mathfrak{B}$ is as in \eqref{e2.1}.

\begin{lemma}\label{l3.5}{\rm\cite[Lemma 4.4]{hlyy20}}
Let $\vec{p}\in (1,\,\infty)^n$ and  $u\in(1,\,\infty]$.
Then there exists a positive constant $C$ such that, for any sequence $\{f_k\}_{k\in\nn}$ of measurable functions,
$$\lf\|\lf\{\sum_{k\in\nn}\lf[M_{\mathrm{HL}}(f_k)\r]^u\r\}^{1/u}\r\|_{L^{\vec{p}}}\leq C
\lf\|\lf(\sum_{k\in\nn}|f_k|^u\r)^{1/u}\r\|_{L^{\vec{p}}}$$
with the usual modification made when $u=\infty$, where $M_{\mathrm{HL}}$ denotes the Hardy-Littlewood maximal operator as in \eqref{e2.9}.
\end{lemma}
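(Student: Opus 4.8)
The plan is to deduce this vector-valued inequality from scalar weighted estimates for $M_{\mathrm{HL}}$ together with an extrapolation argument, isolating the genuinely mixed-norm difficulty at the very last step. First I would dispose of the endpoint $u=\fz$: since $M_{\mathrm{HL}}(f_k)\le M_{\mathrm{HL}}(\sup_{j}|f_j|)$ pointwise for every $k$ by the monotonicity of $M_{\mathrm{HL}}$, one has $\sup_k M_{\mathrm{HL}}(f_k)\le M_{\mathrm{HL}}(\sup_j|f_j|)$, so the case $u=\fz$ collapses to the scalar boundedness of $M_{\mathrm{HL}}$ on $L^{\vec p}$, which is itself the single-function special case of the conclusion obtained below. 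Hence it suffices to treat $u\in(1,\fz)$, which carries the real content.

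For $u\in(1,\fz)$ I would work through the anisotropic Muckenhoupt theory. The triple $(\rn,\rho,dx)$, with $\rho$ the step homogeneous quasi-norm and $\mathfrak{B}$ as in \eqref{e2.1}, is a space of homogeneous type: Lebesgue measure is doubling against the dilated balls because $|B_k|=b^k$. On such a space the standard Muckenhoupt--Wheeden theory applies, so $M_{\mathrm{HL}}$ is bounded on $L^p_w$ for every $p\in(1,\fz)$ and every anisotropic $A_p$ weight $w$, with operator norm controlled solely by $[w]_{A_p}$. Feeding this scalar weighted bound into the Rubio de Francia extrapolation theorem on spaces of homogeneous type then upgrades it, for every fixed $p\in(1,\fz)$ and every $w\in A_p$, to the weighted Fefferman--Stein inequality $\|(\sum_k[M_{\mathrm{HL}}(f_k)]^u)^{1/u}\|_{L^p_w}\ls\|(\sum_k|f_k|^u)^{1/u}\|_{L^p_w}$, with constant depending only on $[w]_{A_p}$, $p$ and $u$.

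The final step is to convert this weighted scalar conclusion into the unweighted mixed-norm conclusion of the lemma. Setting $F:=(\sum_k[M_{\mathrm{HL}}(f_k)]^u)^{1/u}$ and $G:=(\sum_k|f_k|^u)^{1/u}$, I would invoke a mixed-norm extrapolation principle: once $\|F\|_{L^{p_0}_w}\ls\|G\|_{L^{p_0}_w}$ holds for one exponent $p_0\in(1,\fz)$ and all $w\in A_{p_0}$ with constant depending only on $[w]_{A_{p_0}}$, the bound $\|F\|_{L^{\vec p}}\ls\|G\|_{L^{\vec p}}$ follows for all $\vec p\in(1,\fz)^n$. The mechanism is to run the Rubio de Francia algorithm one coordinate at a time: freezing all variables but $x_i$, one builds through the one-dimensional maximal operator an $A_{p_i}$ weight in $x_i$, applies the weighted estimate, and peels off one integration by H\"older in the $i$-th variable, iterating over $i=1,\dots,n$; the quasi-norm identities in Lemma \ref{r2.1} keep the homogeneity bookkeeping consistent.

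I expect this last passage to be the main obstacle. The difficulty is structural: the mixed norm $L^{\vec p}$ treats the coordinates separately, whereas $M_{\mathrm{HL}}$ is built from the dilated balls $A^k\Delta$, which entangle all coordinates through $A$. Reconciling the two requires checking that the weights produced by the coordinatewise iteration genuinely land in the anisotropic class $A_{p_0}$ for which the scalar weighted bound was established, and that the constants remain uniform throughout the iteration. Once that compatibility is secured, assembling the three steps yields the claimed inequality; the first two steps are standard homogeneous-type harmonic analysis, so essentially the whole weight of the argument rests on the mixed-norm extrapolation.
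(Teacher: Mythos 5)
First, a point of order: the paper never proves Lemma \ref{l3.5} at all --- it is imported verbatim from \cite[Lemma 4.4]{hlyy20} --- so there is no internal proof to compare yours against, and your proposal must be judged on its own merits.

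On those merits, your first two steps are sound: the reduction of $u=\infty$ to the scalar case via $\sup_{k}M_{\mathrm{HL}}(f_k)\le M_{\mathrm{HL}}(\sup_{j}|f_j|)$, and the weighted vector-valued inequality on $L^p_w$ for $p\in(1,\infty)$ and $w$ in the \emph{anisotropic} Muckenhoupt class $A_p(A)$ built from the dilated balls in \eqref{e2.1}, both follow from standard space-of-homogeneous-type theory. The proof collapses at the third step, and the failure is structural, not bookkeeping. The weights manufactured by your coordinatewise Rubio de Francia algorithm are generated by \emph{one-dimensional} maximal operators in the coordinate directions; what they satisfy is a one-variable, product-type $A_{p_i}$ condition, uniformly in the frozen variables. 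But the only weighted estimate available from your Step 2 is for weights in $A_{p_0}(A)$, a condition tested on the dilated balls $x+A^k\Delta$. For a general expansive matrix these are tilted ellipsoids of unbounded eccentricity, not comparable to axis-parallel rectangles: the smallest coordinate box containing $A^k\Delta$ can exceed $|A^k\Delta|=b^k$ in measure by a factor that blows up as $|k|\to\infty$. Hence averages over dilated balls are not controlled by iterated coordinate averages, and there is no mechanism --- and none is offered --- forcing your coordinatewise weights into $A_{p_0}(A)$. Equivalently, the pointwise domination $M_{\mathrm{HL}}\lesssim M_{x_n}\circ\cdots\circ M_{x_1}$ that silently underlies every coordinate-by-coordinate argument holds only when $A$ is diagonal (the setting of \cite{cgn17,hlyy19}); in that special case your scheme works, but for the general $A$ of the present paper the step you yourself flagged as the ``main obstacle'' is not a compatibility check to be verified --- it is false as a mechanism, and the argument does not close.

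There is a second, independent defect: circularity. Any extrapolation that lands in $L^{\vec{p}}$ --- whether run coordinatewise or through the general extrapolation theorem for Banach function spaces --- requires as a hypothesis that the maximal operator generating the weight classes, here the anisotropic $M_{\mathrm{HL}}$ itself, be bounded on $L^{\vec{p}}$ or on its associate space $L^{\vec{p}\,'}$, $\vec{p}\,':=(p_1',\ldots,p_n')$. That is precisely the scalar case of the lemma being proved, and your proposal never establishes it independently; on the contrary, Step 1 explicitly defers the scalar case to ``the single-function special case of the conclusion obtained below.'' So even granting the weight-class compatibility, the argument assumes what it sets out to prove. A genuine repair must first supply an independent proof of the scalar mixed-norm boundedness of the anisotropic $M_{\mathrm{HL}}$ (and its boundedness on $L^{\vec{p}\,'}$); only then can extrapolation with the pairs $\bigl(\{\sum_{k}[M_{\mathrm{HL}}(f_k)]^u\}^{1/u},\{\sum_{k}|f_k|^u\}^{1/u}\bigr)$ be invoked. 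This is exactly why the result is nontrivial for general expansive $A$ and is quoted from \cite{hlyy20} rather than reproved here.
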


\section{Atomic Decomposition of $H^{\vec{p}}_A$\label{s3}}
In this section, we obtain the new atomic decomposition of $H^{\vec{p}}_A$. Let
$$\mathcal{S}_{\infty}:= \lf\{\varphi \in\mathcal{S} : \int_{\rn}x^{\alpha}\varphi(x)dx = 0, \forall \alpha\in \mathbb{Z}^n_+\r\}$$
equipped with the same topology as $\mathcal{S}$, and $\mathcal{S}_{\infty}'$ be its dual space equipped with the
weak-$\ast$ topology. Now we introduce the new anisotropic mixed-norm atomic Hardy space
$\mathbb{H}^{\vec{p},\,q,\,s}_{A}$ in term of $\mathcal{S}_{\infty}'$.
\begin{definition}\label{d3.x}
Let $\vec{p}\in (0,\,\infty)^n$, $q\in(1,\,\infty]$, $A$ be a dilation and
$s\in[\lfloor(1/{p_-}-1) {\ln b/\ln \lambda_-}\rfloor,\,\infty)\cap\zz_+$ with $p_-$ as in \eqref{e2.5}.
 The {\it anisotropic mixed-norm atomic Hardy space}
$\mathbb{H}^{\vec{p},\,q,\,s}_{A}$
is defined to be the set of all distributions $f\in \cs'$ satisfying that there
exist $\{\lambda_i\}_{i\in\nn}\subset\ccc$ and a sequence of
$(\vec{p},\,q,\,s)$-atoms, $\{a_i\}_{i\in\nn}$, supported, respectively,
on $\{{B^{(i)}}\}_{i\in\nn}\subset\mathfrak{B}$ such that
\begin{align*}
f=\sum_{i\in\nn} \lz_{i}a_i \ \ \mathrm{in\ } \ \mathcal{S}_{\infty}'.
\end{align*}
Moreover, for any $f\in \mathbb{H}^{\vec{p},\,q,\,s}_{A}$, let
$$\|f\|_{\mathbb{H}^{\vec{p},\,q,\,s}_{A}}
:=\inf \lf\|\lf\{\sum_{i\in\nn} \lf[\frac{|\lambda_i|\chi_{{B^{(i)}}}}{\|\chi_{{B^{(i)}}}\|
_{L^{\vec{p}}}}\r]^{\eta}\r\}^{1/\eta}\r\|
_{L^{\vec{p}}},$$
where $\eta\in(0,\,\min\{1,\,p_{-}\})$ and the infimum is taken over all the decompositions of $f$ as above.
\end{definition}

\begin{theorem}\label{t3.10}
Let $\vec{p}\in (0,\,1]^n$, $q\in(1,\,\infty]$,
$s\in[\lfloor(1/{p_-}-1) {\ln b/\ln \lambda_-}\rfloor,\,\infty)\cap\zz_+$ with $p_-$ as in \eqref{e2.5} and
$N\in\nn\cap[\lfloor(1/{\underline{p}}-1) {\ln b/\ln \lambda_-}\rfloor+2,\,\infty)$.
Then $$H^{\vec{p}}_{A}=\mathbb{H}^{\vec{p},\,q,\,s}_{A}$$
in the following sense: if $f\in H^{\vec{p}}_{A}$, then $f \in\mathbb{H}^{\vec{p},\,q,\,s}_{A}$ and
$$\|f\|_{\mathbb{H}^{\vec{p},\,q,\,s}_{A}}\leq\|f\|_{H^{\vec{p}}_{A}}.$$
Conversely, if $f \in\mathbb{H}^{\vec{p},\,q,\,s}_{A}$, then there exists a unique extension $F\in H^{\vec{p}}_{A}$ such that,
for any $\phi\in\mathcal{S}_{\infty}$, $\langle F,\phi\rangle =\langle f,\phi\rangle$ and
$$\|F\|_{H^{\vec{p}}_{A}}\leq\|f\|_{\mathbb{H}^{\vec{p},\,q,\,s}_{A}}.$$
\end{theorem}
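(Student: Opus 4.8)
The plan is to prove the two inclusions separately, treating the theorem as an equivalence between a distribution space $H^{\vec p}_A$ (elements of $\cs'$) and the atomic space $\mathbb H^{\vec p,\,q,\,s}_A$ (whose defining series converges only in $\cs'_\infty$). The first inclusion $H^{\vec p}_A\subset\mathbb H^{\vec p,\,q,\,s}_A$ is the easier direction: given $f\in H^{\vec p}_A$, I would invoke Lemma \ref{l3.1} (the Calder\'on--Reichel-type atomic characterization of \cite{hlyy20}) to obtain an atomic decomposition $f=\sum_{i\in\nn}\lz_i a_i$ converging in $\cs'$, together with the quasi-norm bound on the atomic coefficients. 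Since convergence in $\cs'$ implies convergence in the coarser topology of $\cs'_\infty$ (every $\phi\in\cs_\infty$ is in particular a Schwartz function), the same decomposition witnesses $f\in\mathbb H^{\vec p,\,q,\,s}_A$, and taking the infimum over decompositions gives $\|f\|_{\mathbb H^{\vec p,\,q,\,s}_A}\le\|f\|_{H^{\vec p}_A}$.

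The converse inclusion is where the genuine content lies, because an element $f\in\mathbb H^{\vec p,\,q,\,s}_A$ is only an element of $\cs'_\infty$, so I must manufacture a bona fide tempered distribution $F\in\cs'$ (an extension from $\cs_\infty$ to all of $\cs$) and control its $H^{\vec p}_A$-quasi-norm. First I would fix an admissible decomposition $f=\sum_{i\in\nn}\lz_i a_i$ in $\cs'_\infty$ with atomic coefficients nearly realizing $\|f\|_{\mathbb H^{\vec p,\,q,\,s}_A}$. The key step is to show that the series $\sum_{i\in\nn}\lz_i a_i$ actually converges in $\cs'$ to a distribution $F$; this is precisely where the vanishing moments of the atoms and the restriction $\vec p\in(0,1]^n$ are used. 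I would establish that the partial sums $F_M:=\sum_{i=1}^M\lz_i a_i$ form a Cauchy sequence in $\cs'$ by testing against an arbitrary $\phi\in\cs$, splitting $\phi$ into a part in $\cs_\infty$ plus a polynomial-type remainder, and exploiting the size condition $\|a_i\|_{L^q}\le|B^{(i)}|^{1/q}/\|\chi_{B^{(i)}}\|_{L^{\vec p}}$ together with H\"older's inequality to bound $|\la\lz_i a_i,\phi\ra|$. Summability of these pairings then follows from the $L^{\vec p}$ control of the coefficient sequence, via an argument analogous to the one used in proving the atomic norm estimate, reducing matters through Lemma \ref{r2.1} and the Fefferman--Stein inequality of Lemma \ref{l3.5}.

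Once $F\in\cs'$ is constructed, I would verify that $F$ extends $f$ in the required sense, i.e. $\la F,\phi\ra=\la f,\phi\ra$ for every $\phi\in\cs_\infty$, which is immediate since both sides equal $\lim_M\la F_M,\phi\ra$ by the two modes of convergence agreeing on test functions in $\cs_\infty$. Uniqueness of the extension within $H^{\vec p}_A$ follows because two extensions differ by a distribution annihilating all of $\cs_\infty$, hence supported on polynomials, and the only polynomial lying in $H^{\vec p}_A$ is zero (a polynomial has non-integrable grand maximal function unless it vanishes). Finally, to bound $\|F\|_{H^{\vec p}_A}$ I would estimate the grand maximal function $M_N(F)$ directly from the atomic decomposition: pointwise one controls $M_N(\lz_i a_i)$ by splitting space into the region near the supporting dilated ball $B^{(i)}$ (handled by the size/$L^q$ bound and Lemma \ref{l3.5}) and the far region (handled by the vanishing-moment condition producing the decay needed for summability), then assembling via the quasi-triangle inequality of Lemma \ref{r2.1} in $L^{\vec p}$. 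The main obstacle throughout is the $\cs'$-convergence of the atomic series and the accompanying extension/uniqueness argument: unlike the classical one-exponent case treated in \cite{lyy16}, the mixed-norm structure prevents a direct reduction to a single $L^p$ estimate, so the delicate interplay between the anisotropic vanishing moments, the exponent constraint $\vec p\in(0,1]^n$, and the vector-valued maximal inequality must be handled with the mixed-norm tools of Lemmas \ref{r2.1} and \ref{l3.5} rather than scalar substitutes.
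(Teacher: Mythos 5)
Your overall architecture coincides with the paper's: the easy inclusion is obtained exactly as you describe (Lemma \ref{l3.1} plus the observation that convergence in $\cs'$ implies convergence in $\mathcal{S}'_{\infty}$), the extension $F$ is constructed as the limit in $\cs'$ of the atomic series, and uniqueness is settled by noting that two extensions differ by a polynomial (Lemma \ref{l2.60}) and no nonzero polynomial can lie in the Hardy space. However, two of your intermediate steps, as stated, would fail or invoke the wrong tool. First, you cannot split an arbitrary $\phi\in\cs$ into a part in $\cs_{\infty}$ plus a polynomial-type remainder: no nonzero polynomial is a Schwartz function, and $\cs_{\infty}$ is a proper \emph{closed} subspace of $\cs$ (moments pass to the limit under $\cs$-convergence), so no such decomposition of the test function exists. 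The paper's substitute (Lemma \ref{l2.61}) works atom by atom instead: for an atom $b$ supported on $B_k$ with $k\le 0$ one subtracts, \emph{inside} the integral $\int_{\rn} b\phi$, the degree-$s$ Taylor polynomial of $\phi$ at the origin --- legitimate because $b$ has compact support and vanishing moments up to order $s$ --- while for $k>0$ one uses only H\"older's inequality and the size condition; the outcome is the uniform bound $\lf|\int_{\rn}b(y)\phi(y)\,dy\r|\le C M_s(\phi)$ with $C$ independent of the atom, and this uniformity is what makes the series of pairings controllable.

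Second, the summability of those pairings does not come from Lemma \ref{r2.1} together with the Fefferman--Stein inequality of Lemma \ref{l3.5}: that inequality requires exponents in $(1,\infty)^n$ (it is the tool for the molecular Theorem \ref{t2.6}, where it is applied to $L^{\theta\vec{p}}$ with $\theta\underline{p}>1$) and gives no $\ell^1$ control of the coefficients. The correct mechanism is the elementary superadditivity of $\|\cdot\|_{L^{\vec{p}}}$ on nonnegative functions when $\vec{p}\in(0,1]^n$, i.e.\ the paper's Lemma \ref{l3.1x}:
$\sum_{i\in\nn}|\lz_i|\le \lf\|\lf\{\sum_{i\in\nn}\lf[|\lz_i|\chi_{B^{(i)}}/\|\chi_{B^{(i)}}\|_{L^{\vec{p}}}\r]^{\eta}\r\}^{1/\eta}\r\|_{L^{\vec{p}}}$,
and this is precisely (and only) where the hypothesis $\vec{p}\in(0,1]^n$ enters. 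With these two corrections, your construction does give absolute convergence of $\sum_{i\in\nn}\lz_i\int_{\rn}a_i\phi$ for every $\phi\in\cs$, hence $F\in\cs'$. Finally, your plan to re-derive grand maximal function estimates for $M_N(F)$ by a near/far splitting is not wrong but is unnecessary and essentially reproves the hard direction of Lemma \ref{l3.1}: once the atomic series is known to converge to $F$ in $\cs'$, $F$ belongs to $H^{\vec{p},\,q,\,s}_{A}$ by Definition, and Lemma \ref{l3.1} already yields $\|F\|_{H^{\vec{p}}_{A}}\ls\|f\|_{\mathbb{H}^{\vec{p},\,q,\,s}_{A}}$.
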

\begin{remark}
Let $\vec{p}\in (0,\,\infty)^n$.
\begin{enumerate}
\item[(i)]
  When
 $\vec{p}:=\overbrace{\{p,\,\ldots,p\}}^{n\,\mathrm{times}}$, where $p\in(0,\,1]$, this result is also new.
\item[(ii)] When $$A:=\left(
                    \begin{array}{cccc}
                      2^{a_1} & 0 & \cdots & 0 \\
                      0 & 2^{a_2} & \cdots & 0 \\
                      \vdots & \vdots &  & \vdots \\
                      0 & 0 & \cdots & 2^{a_n} \\
                    \end{array}
                  \right)
$$
with $\vec{a}:=(a_1,\ldots,a_n)\in[1,\,\infty)^n$, this result is reduced to the \cite[Theorem 3.3]{hlyy20x}.
\end{enumerate}
\end{remark}

To prove Theorem \ref{t3.10}, we need some technical lemmas as following.

\begin{lemma}\label{l2.61}
Let $\vec{p}\in (0,\,\infty)^n$, $q\in(1,\infty)$ and $s\in[\lfloor(1/{p_-}-1){\ln b/\ln \lambda_-}\rfloor,\,\infty)\cap\zz_+$ with $p_-$ as in \eqref{e2.5}. Then there exists a positive constant $C$ such that, for any $(\vec{p},q,s)$-atom b and $\varphi \in \mathcal{S}$,
$$\lf|\int_{\rn}b(y)\varphi(y)dy\r|\leq C M_{s}(\varphi),$$
where $M_{s}(\varphi):=\sup\{\|\varphi\|_{\alpha,\,0}:\alpha \in \mathbb{Z}^{n}_{+},  |\alpha|\leq s+1\}.$
\end{lemma}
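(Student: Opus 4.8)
The plan is to test the atom $b$ against $\varphi$ by playing its two defining features against each other—the $L^q$ size bound (ii) and the vanishing moments (iii) of Definition \ref{d3.1}—and to split the argument according to the scale of the supporting dilated ball. Fix a $(\vec p,q,s)$-atom $b$ with $\supp b\subset B=x_0+B_{k}$ for some $x_0\in\rn$ and $k\in\zz$. Since the mixed (quasi-)norm is invariant under coordinatewise translation, $\|\chi_{x_0+B_k}\|_{L^{\vec{p}}}=\|\chi_{B_k}\|_{L^{\vec{p}}}$ depends only on the scale $k$, and $|B|=b^{k}$; so the two regimes $k\ge0$ (equivalently $|B|\ge1$) and $k<0$ can be treated separately. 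In the large-scale regime $k\ge0$ I would discard the moments and estimate directly: by H\"older's inequality with exponents $q,\,q'$ (both finite, as $q\in(1,\infty)$) and (ii),
\[
\lf|\int_\rn b\varphi\r|\le\|b\|_{L^q}\,\|\varphi\|_{L^{q'}(B)}\le\frac{|B|^{1/q}}{\|\chi_B\|_{L^{\vec{p}}}}\,\|\varphi\|_{\fz}\,|B|^{1/q'}=\frac{b^{k}}{\|\chi_{B_k}\|_{L^{\vec{p}}}}\,\|\varphi\|_{\fz}.
\]
The prefactor $b^{k}/\|\chi_{B_k}\|_{L^{\vec{p}}}$ is bounded uniformly in $k\ge0$ by the size estimate of $\|\chi_{B_k}\|_{L^{\vec{p}}}$ (see \cite{hlyy20}), and since $\|\varphi\|_{\fz}=\|\varphi\|_{0,0}\le M_s(\varphi)$, this regime is finished.

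In the small-scale regime $k<0$ the vanishing moments are essential. Let $P$ be the degree-$s$ Taylor polynomial of $\varphi$ at $x_0$; because $\deg P\le s$, condition (iii) gives $\int_\rn bP=0$, so $\int_\rn b\varphi=\int_\rn b(\varphi-P)$. Taylor's theorem yields, for $y\in B$, $|\varphi(y)-P(y)|\le C|y-x_0|^{s+1}\max_{|\alpha|=s+1}\|\partial^\alpha\varphi\|_{\fz}$. For $y\in B=x_0+B_k$ one has $A^{-k}(y-x_0)\in\Delta$, whence by \eqref{e2.4v} (with $j=k<0$) $|y-x_0|\ls(\lambda_-)^{k}$. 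Inserting this and running H\"older with (ii) exactly as above gives
\[
\lf|\int_\rn b\varphi\r|\ls\frac{b^{k}(\lambda_-)^{k(s+1)}}{\|\chi_{B_k}\|_{L^{\vec{p}}}}\max_{|\alpha|=s+1}\|\partial^\alpha\varphi\|_{\fz}.
\]
Using the lower bound $\|\chi_{B_k}\|_{L^{\vec{p}}}\gtrsim b^{k/p_-}$ for $k\le0$, the prefactor is $\ls b^{k(1-1/p_-)}(\lambda_-)^{k(s+1)}$; since $k<0$, this is uniformly bounded precisely when $(1/p_--1)\ln b\le(s+1)\ln\lambda_-$, which is exactly what the hypothesis $s\ge\lfloor(1/p_--1)\ln b/\ln\lambda_-\rfloor$ guarantees. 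Thus this regime is dominated by $\max_{|\alpha|=s+1}\|\partial^\alpha\varphi\|_{\fz}\le M_s(\varphi)$, and combining the two regimes proves the claim.

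The routine ingredients here are H\"older's inequality and Taylor's theorem; the genuine content, and the step I expect to be the main obstacle, is the bookkeeping in the small-scale case. One must pin down a sharp lower bound for $\|\chi_{B_k}\|_{L^{\vec{p}}}$ in terms of $b^{k/p_-}$—an anisotropic, genuinely mixed-norm estimate, since $B_k$ is an $A$-dilated ellipsoid rather than a coordinate box—and then check that the resulting exponent inequality coincides with the moment condition on $s$. This calibration of $s$ against the pair $(\ln b,\ln\lambda_-)$, together with the conversion of the Euclidean size $|y-x_0|$ of points in $B_k$ into powers of $\lambda_-$ via \eqref{e2.4v}, is where the anisotropy enters and must be tracked carefully; once it is in place, everything else reduces to the classical Calder\'on--Zygmund atom estimate.
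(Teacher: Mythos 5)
Your proposal takes essentially the same route as the paper's own proof: split according to the scale $k$ of the supporting dilated ball; for $k\ge 0$ use H\"older's inequality, the size condition and the lower bound $\|\chi_{B_k}\|_{L^{\vec{p}}}\gtrsim b^{k/p_+}$ (note that both your argument and the paper's implicitly need $p_+\le 1$ at this step, consistent with the application in Theorem \ref{t3.10} where $\vec{p}\in(0,1]^n$, even though the lemma is stated for $\vec{p}\in(0,\infty)^n$); for $k<0$ use the vanishing moments, Taylor's theorem and $\|\chi_{B_k}\|_{L^{\vec{p}}}\gtrsim b^{k/p_-}$, which the paper likewise imports from \cite{hlyy20}. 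The one notable difference is in your favor: you bound $|y-x_0|^{s+1}\lesssim(\lambda_-)^{k(s+1)}$ via \eqref{e2.4v} and check that the resulting exponent inequality is exactly what $s\ge\lfloor(1/p_- -1)\ln b/\ln\lambda_-\rfloor$ guarantees, whereas the paper writes $b^{k(s+1)}$ at the same step, which is not a valid upper bound for $|y|^{s+1}$ when $k<0$ (its conclusion survives once corrected, precisely as in your computation).
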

\begin{proof} For any $(\vec{p},q,s)$-atom $b$, we assume supp $b\subset B_{k}.$
Now we claim that, for any $k\in\zz$,
$$\|\chi_{B_k}\|_{L^{\vec{p}}}^{-1}\lesssim\max\{b^{-k/p_-},\,b^{-k/p_+}\}.$$
In fact, there exists a $K\in\zz$ large enough such that, when $k\in(K,\,\infty)\cap\zz$, the
\begin{align*}
\|\chi_{B_k}\|_{L^{\vec{p}}}&= \lf\{\int_{\mathbb{R}}\cdots\lf[\int_{\mathbb{R}}|\chi_{B_k}(x_1,\,\ldots x_n)|^{p_1}\,dx_1\r]^{{p_2}/{p_1}}\cdots dx_n\r\}^{{1}/{p_n}}\\
&\geq\lf\{\int_{\mathbb{R}}\cdots\lf[\int_{\mathbb{R}}|\chi_{B_k}(x_1,\,\ldots x_n)|^{p_+}\,dx_1\r]^{{p_+}/{p_+}}\cdots dx_n\r\}^{{1}/{p_+}}\\
&=|B_k|^{1/p_+}.
\end{align*}
When $k\in(-\infty,\,K]$, by \cite[Lemma 6.8]{hlyy20}, we conclude that, for any $\nu\in(0, 1)$,
$$\frac{\|\chi_{B_K}\|_{L^{\vec{p}}}}{\|\chi_{B_k}\|_{L^{\vec{p}}}}
\lesssim b^{(1+\nu)(K-k)/p_-}.$$
Let $\nu\rightarrow 0$. Then
$$\frac{1}{\|\chi_{B_k}\|_{L^{\vec{p}}}}
\lesssim \frac{b^{K/p_-}}{\|\chi_{B_K}\|_{L^{\vec{p}}}}b^{-k/p_-}
\thicksim|B_k|^{-1/p_-}.$$
If $k>0$, we have
$$\|b\|_{L^{q}}\leq \frac{|B_{k}|^{{1}/{q}}}{\|\chi_{B_k}\|_{L^{\vec{p}}}}\lesssim|B_k|^{{1}/{q}-{1}/{p_{+}}}.$$
 If $k\leq 0$, we have
$$ \|b\|_{L^{q}}\leq \frac{|B_{k}|^{{1}/{q}}}{\|\chi_{B_k}\|_{L^{\vec{p}}}}\lesssim|B_k|^{{1}/{q}-{1}/{p_{-}}}.$$
Therefore, when $k>0$, we have
\begin{align*}
\lf|\int_{\mathbb{R}^{n}}b(y)\varphi(y)dy\r| &\leq \|\varphi\|_{L^{\infty}} \|b\|_{{L^{q}}}|B_k|^{{1}/{q'}}\\
&\leq \|\varphi\|_{\mathbf{0},\,0}\|b\|_{{L^{q}}}|B_k|^{{1}/{p_{+}}-{1}/{q}}  \\
&\leq M_{s}(\varphi).
\end{align*}
When $k\leq0$, by the vanishing moment of $b$ and the Taylor reminder theorem, we obtain
\begin{align*}
\lf|\int_{\mathbb{R}^{n}}b(y)\varphi(y)dy\r| &\leq \lf|\int_{\rn}b(y)\lf[\varphi(y)-\mathop {\sum }\limits_{|\alpha|\leq s}\frac{\partial^{\alpha}\varphi(\mathbf{0})}{\alpha!}y^{\alpha}\r]dy\r|\\
&\lesssim M_{s}(\varphi)\lf|\int_{B_{k}}b(y)|y|^{s+1}dy\r|\\
&\lesssim M_{s}(\varphi)\|b\|_{{L^{q}}}b^{k(s+1)}|B_{k}|^{{1}/{q'}}  \\
&\lesssim M_{s}(\varphi)|B_{k}|^{{1}/{q}-{1}/{p_{-}}}|B_{k}|^{s+1-{1}/{q}}\\
&\lesssim M_{s}(\varphi).
\end{align*}
This finishes the proof of Lemma \ref{l2.61}.
\end{proof}
\begin{lemma}\label{l3.1x}
Let $\vec{p}\in(0,\,1]^n$. Then, for any $\{\lambda_i\}_{i\in \nn}\subset\mathbb{C}$ and $\{B^{(i)}\}_{i\in\nn} \subset\mathfrak{B}$,
$$\sum_{i\in\nn}|\lambda_i|\leq\lf\|\lf\{\sum_{i\in\nn}
\lf[\frac{|\lambda_i|\chi_{B^{(i)}}}{\|\chi_{B^{(i)}}\|_{L^{\vec{p}}}}\r]
^{\eta}\r\}^{1/\eta}\r\|_{L^{\vec{p}}}$$
where $\eta\in(0,\,\underline{p})$ with $\underline{p}$ as in \eqref{e2.5.1}.
\end{lemma}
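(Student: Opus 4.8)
The plan is to reduce the desired lower bound to an elementary pointwise estimate together with a reverse Minkowski inequality for the mixed-norm space $L^{\vec{p}}$. For each $i\in\nn$ set
$$g_i:=\frac{|\lambda_i|\chi_{B^{(i)}}}{\|\chi_{B^{(i)}}\|_{L^{\vec{p}}}}.$$
By the homogeneity asserted in Lemma \ref{r2.1} one has $\|g_i\|_{L^{\vec{p}}}=|\lambda_i|$, so the claim is equivalent to
$$\sum_{i\in\nn}\|g_i\|_{L^{\vec{p}}}\le\lf\|\lf(\sum_{i\in\nn}g_i^{\eta}\r)^{1/\eta}\r\|_{L^{\vec{p}}}.$$
All functions in sight are nonnegative, which is what makes both of the following steps run in the right direction.

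First I would record the pointwise comparison. Since $\vec{p}\in(0,1]^n$ gives $\underline{p}=p_-\le1$ and hence $\eta<1$, the subadditivity of $t\mapsto t^{\eta}$ on $[0,\fz)$ yields $\lf(\sum_{i\in\nn}g_i\r)^{\eta}\le\sum_{i\in\nn}g_i^{\eta}$, that is, $\sum_{i\in\nn}g_i\le\lf(\sum_{i\in\nn}g_i^{\eta}\r)^{1/\eta}$ pointwise. Because the mixed norm is monotone on nonnegative functions (each of the iterated integrals defining $\|\cdot\|_{L^{\vec{p}}}$ is monotone and every exponent is positive), this gives
$$\lf\|\sum_{i\in\nn}g_i\r\|_{L^{\vec{p}}}\le\lf\|\lf(\sum_{i\in\nn}g_i^{\eta}\r)^{1/\eta}\r\|_{L^{\vec{p}}}.$$
It therefore remains only to prove the reverse Minkowski inequality $\sum_{i\in\nn}\|g_i\|_{L^{\vec{p}}}\le\|\sum_{i\in\nn}g_i\|_{L^{\vec{p}}}$ for nonnegative functions, and this is the crux of the argument.

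The hard part is exactly this reverse Minkowski inequality, which I would obtain by iterating the one-dimensional version through the $n$ nested integrals that define $\|\cdot\|_{L^{\vec{p}}}$. For a single variable and an exponent $p\in(0,1]$, the classical reverse Minkowski inequality states that $\sum_i\|h_i\|_{L^p}\le\|\sum_i h_i\|_{L^p}$ for nonnegative $h_i$. Applying this in the innermost variable $x_1$ (exponent $p_1\le1$) bounds the inner $L^{p_1}(dx_1)$-norm of $\sum_i g_i$ from below, for a.e.\ fixed $(x_2,\ldots,x_n)$, by the sum over $i$ of the corresponding inner norms; monotonicity of the remaining outer integrals then lets me replace, and I repeat the one-dimensional inequality successively in $x_2$ (exponent $p_2\le1$), \ldots, $x_n$. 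I would first carry this out for finitely many terms and then pass to the countable sum via the monotone convergence theorem, which is legitimate since every quantity is nonnegative and nondecreasing in the number of terms. Combining the displayed pointwise estimate with this inequality yields $\sum_{i\in\nn}|\lambda_i|=\sum_{i\in\nn}\|g_i\|_{L^{\vec{p}}}\le\|(\sum_{i\in\nn}g_i^{\eta})^{1/\eta}\|_{L^{\vec{p}}}$, as desired.
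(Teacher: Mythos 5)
Your proposal is correct and follows essentially the same route as the paper: normalize so that $\|g_i\|_{L^{\vec{p}}}=|\lambda_i|$, apply a reverse Minkowski inequality $\sum_{i}\|g_i\|_{L^{\vec{p}}}\le\lf\|\sum_{i}g_i\r\|_{L^{\vec{p}}}$ for nonnegative functions, and then use the pointwise bound $\sum_{i}g_i\le\lf(\sum_{i}g_i^{\eta}\r)^{1/\eta}$ together with monotonicity of the mixed norm. The only difference is that the paper simply asserts the reverse Minkowski step (citing only $\vec{p}\in(0,1]^n$ and the elementary inequality \eqref{e3.x}), whereas you correctly identify it as the crux and supply the missing justification by iterating the one-dimensional reverse Minkowski inequality through the nested integrals --- a welcome improvement in rigor, not a different method.
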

\begin{proof}
Since $\vec{p}\in(0,\,1]^n$ and the well-known inequality that, for all $\{\lambda_i\}_{i\in\nn}\subset\mathbb{C}$ and $\vartheta\in(0,\,1]$,
\begin{align}\label{e3.x}
\lf(\sum_{i\in\nn}|\lambda_i|\r)^{\vartheta}\leq
\sum_{i\in\nn}|\lambda_i|^{\vartheta}.
\end{align}
Then we have
 \begin{align*}
\sum_{i\in\nn}|\lambda_i|&=\sum_{i\in\nn}|\lambda_i| \nonumber
\lf\|\frac{\chi_{B^{(i)}}}{\|\chi_{B^{(i)}}\|_{L^{\vec{p}}}}\r\|_{{L^{\vec{p}}}}\\ \nonumber
&\leq\lf\|\sum_{i\in\nn}\frac{|\lambda_i|\chi_{B^{(i)}}}{\|\chi_{B^{(i)}}\|_{L^{\vec{p}}}}
\r\|_{L^{\vec{p}}}\\ \nonumber
&\leq\lf\|\lf\{\sum_{i\in\nn}\lf[\frac{|\lambda_i|\chi_{B^{(i)}}}
{\|\chi_{B^{(i)}}\|_{L^{\vec{p}}}}\r]
^{\eta}\r\}^{1/\eta}\r\|_{L^{\vec{p}}}.\nonumber
\end{align*}
This finishes the proof of Lemma \ref{l3.1x}.
\end{proof}

\begin{lemma}\label{l2.60}\rm{\cite{l07}}
Let $\pi:\,\mathcal{S}^{' }\rightarrow \mathcal{S}^{'}_{\infty}$ satisfy that, for any
$f\in \mathcal{S}^{' }$ and $\phi\in\mathcal{S}^{'}_{\infty}$,
$$\langle\pi(f),\,\phi\rangle=\langle f,\,\phi \rangle.$$ Then
$$\mathcal{P}=\{f\in\mathcal{S}^{' }:\pi(f)=0\}.$$
where $\mathcal{P}$ denote all polynomials on $\rn$.
Moreover, $\mathcal{P}$ is closed in $\mathcal{S}^{' }$.
\end{lemma}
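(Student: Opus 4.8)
The plan is to establish the set identity $\cp=\{f\in\cs':\pi(f)=0\}$ by proving both inclusions, and then to deduce closedness from the identity itself. The inclusion $\cp\subseteq\{f\in\cs':\pi(f)=0\}$ is immediate: if $P(x)=\sum_{|\alpha|\le d}c_\alpha x^\alpha$ is a polynomial, then for every $\varphi\in\mathcal{S}_\infty$ we have $\langle P,\varphi\rangle=\sum_{|\alpha|\le d}c_\alpha\int_\rn x^\alpha\varphi(x)\,dx=0$ by the defining vanishing-moment condition of $\mathcal{S}_\infty$, so $\pi(P)=0$.

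The converse inclusion is the core of the argument, and I would carry it out via the Fourier transform, which is a topological isomorphism of $\cs$ and of $\cs'$ onto themselves. The decisive observation is that, since each moment $\int_\rn x^\alpha\varphi(x)\,dx$ is a constant multiple of $\partial^\alpha\hat\varphi(\mathbf{0})$, one has $\varphi\in\mathcal{S}_\infty$ precisely when $\hat\varphi$ vanishes to infinite order at the origin; denote by $\mathcal{Z}$ the space of all such Schwartz functions. Suppose now $f\in\cs'$ annihilates $\mathcal{S}_\infty$. Using the duality $\langle\hat f,\psi\rangle=\langle f,\hat\psi\rangle$ together with the fact that $\hat\psi\in\mathcal{S}_\infty$ for every $\psi\in\mathcal{Z}$ (again by the moment computation applied to the reflected double transform), the hypothesis transfers to the statement that $g:=\hat f$ annihilates $\mathcal{Z}$. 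Since every Schwartz function vanishing in a neighborhood of $\mathbf{0}$ belongs to $\mathcal{Z}$, this forces $\supp g\subseteq\{\mathbf{0}\}$. By the classical structure theorem for distributions supported at a single point, $g=\sum_{|\alpha|\le m}c_\alpha\partial^\alpha\delta_{\mathbf{0}}$ for some $m\in\zz_+$ and $\{c_\alpha\}\subset\ccc$; applying the inverse Fourier transform and recalling that $\mathcal{F}^{-1}(\partial^\alpha\delta_{\mathbf{0}})$ is a constant multiple of the monomial $x^\alpha$, we conclude that $f$ is a polynomial, i.e. $f\in\cp$. Combined with the first step, this yields the asserted identity.

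Finally, closedness of $\cp$ in $\cs'$ is a formal consequence: by the identity just proved, $\cp=\bigcap_{\varphi\in\mathcal{S}_\infty}\{f\in\cs':\langle f,\varphi\rangle=0\}$, and each evaluation $f\mapsto\langle f,\varphi\rangle$ is, by definition, continuous for the weak-$\ast$ topology on $\cs'$. Hence each set in the intersection is weak-$\ast$ closed, and an arbitrary intersection of closed sets is closed, so $\cp$ is closed.

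I expect the only genuinely nontrivial step to be the converse inclusion, where the two key ingredients are the bookkeeping that turns vanishing moments into vanishing to infinite order at the origin under the Fourier transform, and the structure theorem identifying a point-supported distribution with a finite linear combination of derivatives of $\delta_{\mathbf{0}}$; the remaining parts are routine.
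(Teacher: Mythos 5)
The paper offers no proof of this lemma at all---it is quoted directly from \cite{l07}---so the only question is whether your argument is sound, and it is. Your proof is the standard (and essentially the cited reference's) argument for this classical fact: the Fourier transform converts the vanishing-moment condition defining $\mathcal{S}_{\infty}$ into vanishing to infinite order at the origin, so any $f\in\mathcal{S}'$ annihilating $\mathcal{S}_{\infty}$ has $\hat{f}$ supported in $\{\mathbf{0}\}$; the structure theorem for point-supported distributions then gives $\hat{f}=\sum_{|\alpha|\le m}c_\alpha\partial^{\alpha}\delta_{\mathbf{0}}$, hence $f\in\mathcal{P}$; and closedness is immediate since $\mathcal{P}$ is exhibited as an intersection of kernels of weak-$\ast$ continuous evaluation functionals $f\mapsto\langle f,\varphi\rangle$, $\varphi\in\mathcal{S}_{\infty}$.
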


\begin{proof}[Proof of Theorem \ref{t3.10}]
By the definitions of $\mathcal{S}$ and $\mathcal{S}_{\infty}$, we find that $$\mathcal{S}_{\infty}\subset\mathcal{S},$$ which implies that
$$\mathcal{S}^{'}\subset\mathcal{S}^{'}_{\infty}.$$
From this, Lemma \ref{l3.1}, we conclude that
$${H^{\vec{p}}_{A}}={H^{\vec{p},\,q,\,s}_{A}}\subset{\mathbb{H}^{\vec{p},\,q,\,s}_{A}}$$
and for any $f\in {H^{\vec{p}}_{A}}$,
$$\|f\|_{\mathbb{H}^{\vec{p},\,q,\,s}_{A,}}\leq\|f\|_{H^{\vec{p}}_{A}}$$
Therefore, to prove Theorem \ref{t3.10}, it suffices to show that, for any $f\in \mathbb{H}^{\vec{p},\,q,\,s}_{A}$,
there exists a unique extension $F\in H^{\vec{p},\,q,\,s }_{A}$ such that, for any $\phi \in \mathcal{S}_{\infty}$.
$$\langle F,\,\phi\rangle=\langle f,\,\phi\rangle$$
and
$$\|F\|_{H^{\vec{p},\,q,\,s}_{A}}\leq\|f\|_{\mathbb{H}^{\vec{p},\,q,\,s}_{A}}.$$
To show this, for any $f\in \mathbb{H}^{\vec{p},\,q,\,s }_{A}$, by Definition \ref{d3.x},
we deduce that there exists a sequence of $(\vec{p},\,q,\,s)$-atom $\{a_{i}\}_{i\in\nn}$,
supported, respectively, on  $\{B^{(i)}\}_{i\in\nn}\subset\mathfrak{B}$, such that
$$f=\sum_{i\in\nn} \lz_ia_i\,\,\mathrm{in} \,\,\mathcal{S}^{'}_{\infty}$$
and
\begin{align*}
\|f\|_{\mathbb{H}^{\vec{p},\,q,\,s}_{A}}
\thicksim \lf\|\lf\{\sum_{i\in\nn} \lf[\frac{|\lambda_i|\chi_{{B^{(i)}}}}{\|\chi_{{B^{(i)}}}
\|_{L^{\vec{p}}}}\r]^{\eta}\r\}^{1/\eta}\r\|
_{L^{\vec{p}}}<\infty
\end{align*}
with $\eta\in(0,\,\min\{1,\,p_-\})$.

For any $\varphi\in\mathcal{S}$, set
$$\langle F,\,\varphi\rangle:=\displaystyle\sum_{i\in\mathbb{N}}\lambda_i\int_{\rn}a_i(x)\varphi(x)\,dx.$$
Then, using Lemmas \ref{l2.61} and \ref{l3.1x}, we have
$$|\langle F,\,\varphi\rangle|\lesssim M_{s}(\varphi)\sum_{i\in\mathbb{N}}|\lambda_i|
\lesssim M_{s}(\varphi)\lf\|\lf\{\sum_{i\in\nn} \lf[\frac{|\lambda_i|\chi_{{B^{(i)}}}}{\|\chi_{{B^{(i)}}}
\|_{L^{\vec{p}}}}\r]^{\eta}\r\}^{1/\eta}\r\|
_{L^{\vec{p}}}, \,\, \eta\in(0,\,\min\{1,\,p_-\}),$$
which implies  that $F\in\mathcal{S'}$.
From this, we see that, $F\in H^{\vec{p},\,q,\,s }_{A}$ and for all $\phi\in\mathcal{S}_{\infty}$,
$$\langle F,\,\phi\rangle=\langle f,\,\phi\rangle.$$
By the Definition \ref{d3.x}, we know that
$$\|F\|_{H^{\vec{p},\,q,\,s}_{A}}\leq\|f\|_{\mathbb{H}^{\vec{p},\,q,\,s}_{A}}.$$
Finally, we only need to show that the distribution $F$ is unique.
In fact, if there exists distribution $\tilde{F}\in H^{\vec{p},\,q,\,s }_{A}$ such that,
for any $\psi\in\mathcal{S}_{\infty}$, $\langle\tilde{F},\,\psi\rangle=\langle f,\,\psi\rangle$.
Then, by Lemma \ref{l2.60}, we obtain
$$F-\tilde{F}\in \mathcal{P}.$$
By the fact that any element of $H^{\vec{p},\,q,\,s}_{A}$ vanishes weakly in infinity
 and $F-\tilde{F}\in H^{\vec{p},\,q,\,s}_{A}$, we deduce that
$F=\tilde{F}$ in $\mathcal{S}_{\infty}'$. This finishes the proof of Theorem \ref{t3.10}.

\end{proof}

\section{Molecular Decomposition of $H^{\vec{p}}_A$\label{s4}}
\hskip\parindent
In this section, we introduce the definition of anisotropic  mixed-norm molecules as follows.
\begin{definition}\label{d2.4x}
Let $\vec{p}\in (0,\,\infty)^n$, $q\in(1,\,\infty]$,
\begin{eqnarray}\label{e2.10}
s\in\lf[\lf\lfloor\lf(\frac{1}{p_-}-1\r)\frac{\ln b}{\ln\lambda_-}\r\rfloor,\,\infty\r)\cap\zz_+.
\end{eqnarray}
and $\varepsilon\in(0,\,\infty)$. A measurable function $\mathfrak{M}$ is called an
{\it{anisotropic mixed-norm $(\vec{p},\,q,\,s,\,\ez)$-molecule}} associated with a dilated ball
$x_0+B_i\in\mathfrak{B}$ if
\begin{enumerate}
\item[\rm{(i)}]  for each
$j\in\zz_+$, $\|\mathfrak{M}\|_{L^q(U_j(x_0+B_i))}\le b^{-j\varepsilon}\frac{|B_{i}|^{1/q}}{\|\chi_{x_0+B_i}\|_{L^{\vec{p}}}}$, where $U_0(x_0+B_i):=x_0+B_i$ and, for each $j\in\nn$, $U_j(x_0+B_{i}):=x_0+(A^jB_{i})\setminus(A^{j-1}B_{i})$;
\item[\rm{(ii)}] for all $\alpha\in \mathbb{Z}^n_+$ with $|\alpha|\leq s$, $\int_\rn \mathfrak{M}(x)x^\alpha dx=0$.
\end{enumerate}
\end{definition}
\begin{remark}\label{r2.1x}
Let $\vec{p}\in (0,\,\infty)^n$.
\begin{enumerate}
\item[(i)]
When
 $\vec{p}:=\overbrace{\{p,\,\ldots,p\}}^{n\,\mathrm{times}}$, where $p\in(0,\,1]$,
the definition of
 the molecule in Definition \ref{d2.4x} is reduced to the molecule in \cite[Definition 3.7]{lyy16}.
\item[(ii)] When it comes back to the isotropic setting, i.e., $A:=2{\rm I}_{n\times n}$, and $\rho(x):= |x|^n$ for all $x\in\rn$, the definition of the molecule in Definition \ref{d2.4} is also new.

\end{enumerate}
\end{remark}

In what follows, we call an anisotropic mixed-norm $(\vec{p},\,q,\,s,\,\varepsilon)$-molecule simply by
$(\vec{p},\,q,\,s,\,\varepsilon)$-molecule. Via $(\vec{p},\,q,\,s,\,\varepsilon)$-molecules,
we introduce the following anisotropic mixed-norm molecular Hardy space
$H^{\vec{p},\,q,\,s,\,\varepsilon}_{A,\,\mathrm{mol}}$.


\begin{definition}\label{d2.5}
Let $\vec{p}\in (0,\,\infty)^n$, $q\in(1,\,\infty]$, $A$ be a dilation,
 $s$ be as in \eqref{e2.10} and $\varepsilon\in(0,\,\infty)$.
 The {\it anisotropic mixed-norm molecular Hardy space}
$H^{\vec{p},\,q,\,s,\,\varepsilon}_{A,\,\mathrm{mol}}$
is defined to be the set of all distributions $f\in \cs'$ satisfying that there
exist $\{\lambda_i\}_{i\in\nn}\subset\ccc$ and a sequence of
$(\vec{p},\,q,\,s,\,\varepsilon)$-molecules, $\{\mathfrak{M}_i\}_{i\in\nn}$, associated, respectively,
with $\{{B^{(i)}}\}_{i\in\nn}\subset\mathfrak{B}$ such that
\begin{align*}
f=\sum_{i\in\nn} \lz_{i}\mathfrak{M}_i \ \ \mathrm{in\ } \ \cs'.
\end{align*}
Moreover, for any $f\in H^{\vec{p},\,q,\,s,\,\varepsilon}_{A,\,\mathrm{mol}}$, let
$$\|f\|_{H^{\vec{p},\,q,\,s,\,\varepsilon}_{A,\,\mathrm{mol}}}
:=\inf \lf\|\lf\{\sum_{i\in\nn} \lf[\frac{|\lambda_i|\chi_{{B^{(i)}}}}{\|\chi_{{B^{(i)}}}\|_{L^{\vec{p}}}}\r]
^{\eta}\r\}^{1/\eta}\r\|
_{L^{\vec{p}}},$$
where $\eta\in(0,\,\min\{1,\,p_{-}\})$ and the infimum is taken over all the decompositions of $f$ as above.
\end{definition}

The following Theorem \ref{t2.6} shows the molecular characterization of
$H^{\vec{p}}_{A}$, whose proof is given in the next section.
\begin{theorem}\label{t2.6}
Let $\vec{p}\in (0,\,\infty)^n$ and $q\in(1,\,\infty]\cap(p_+,\,\infty]$ with $p_+$ as in \eqref{e2.5},
$s$ be as in \eqref{e2.10}, $\varepsilon\in (\max\{1,\,(s+1)\log_b(\lambda_+)\},\,\infty)$ and $N\in\nn\cap[\lfloor(1/{\underline{p}}-1) {\ln b/\ln \lambda_-}\rfloor+2,\,\infty)$
with $\underline{p}$ as in \eqref{e2.5.1}.
Then $$H^{\vec{p}}_{A}=H^{\vec{p},\,q,\,s,\,\varepsilon}_{A,\,\mathrm{mol}}$$ with equivalent quasi-norms.
\end{theorem}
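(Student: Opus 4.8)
The plan is to establish the two quasi-norm inequalities separately, where the inclusion $H^{\vec{p}}_{A}\subset H^{\vec{p},\,q,\,s,\,\varepsilon}_{A,\,\mathrm{mol}}$ is elementary and the reverse inclusion carries all the real work. For the easy inclusion I would invoke the atomic characterization of Lemma \ref{l3.1}, which (since $q\in(p_+,\,\infty]$) gives $H^{\vec{p}}_{A}=H^{\vec{p},\,q,\,s}_{A}$ with equivalent quasi-norms, and then simply observe that every $(\vec{p},\,q,\,s)$-atom is a $(\vec{p},\,q,\,s,\,\varepsilon)$-molecule. Indeed, if $a$ is supported on $B=x_0+B_i$ with $\|a\|_{L^q}\le|B|^{1/q}/\|\chi_B\|_{L^{\vec{p}}}$, then condition (i) of Definition \ref{d2.4x} holds with the factor $b^{0\cdot\varepsilon}=1$ on $U_0(x_0+B_i)=x_0+B_i$ and holds vacuously (as $a\equiv0$) on each $U_j$ with $j\ge1$, while the vanishing-moment conditions coincide. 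Hence any atomic decomposition of $f$ is a molecular one and $\|f\|_{H^{\vec{p},\,q,\,s,\,\varepsilon}_{A,\,\mathrm{mol}}}\ls\|f\|_{H^{\vec{p}}_{A}}$.

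The substance is the reverse inclusion. Given a molecular decomposition $f=\sum_{i\in\nn}\lambda_i\mathfrak{M}_i$ with $\mathfrak{M}_i$ associated with $B^{(i)}=x_0^{(i)}+B_{\ell_i}$, I would decompose each individual molecule into atoms. Splitting $\mathfrak{M}_i=\sum_{k\ge0}\mathfrak{M}_i\chi_{U_k(B^{(i)})}$ along the anisotropic annuli, the pieces $\mathfrak{M}_i\chi_{U_k}$ carry the correct $L^q$ size but fail the moment condition, so for each $k$ I would introduce a polynomial $P_{i,k}$ of degree at most $s$ supported on $x_0^{(i)}+A^kB_{\ell_i}$ whose moments against $\{x^\gamma\}_{|\gamma|\le s}$ equal those of the tail $\mathfrak{M}_i\chi_{(x_0^{(i)}+A^kB_{\ell_i})^\complement}$; such $P_{i,k}$ exist and are unique by duality with the monomials on the dilated ball. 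Setting $a_{i,k}:=\mathfrak{M}_i\chi_{U_k}+P_{i,k}-P_{i,k-1}$ (with $P_{i,-1}:=0$), a telescoping computation using the overall vanishing moment $\int_\rn\mathfrak{M}_i(x)x^\gamma\,dx=0$ shows that each $a_{i,k}$ is supported on $x_0^{(i)}+A^kB_{\ell_i}$, has all moments up to order $s$ equal to zero, and that $\sum_k a_{i,k}=\mathfrak{M}_i$. The quantitative heart is the size bound $\|a_{i,k}\|_{L^q}\ls\theta_k\,|B_{\ell_i}|^{1/q}/\|\chi_{B^{(i)}}\|_{L^{\vec{p}}}$ with $\theta_k$ geometrically decaying: the molecular decay contributes $b^{-k\varepsilon}$, the estimate of $P_{i,k}$ via the dual basis and a Taylor-type control of the moments brings in the anisotropic monomial growth up to order $\lambda_+^{k(s+1)}$ together with a measure factor $b^{k/q'}$, and it is exactly the hypothesis $\varepsilon>\max\{1,\,(s+1)\log_b(\lambda_+)\}$ that forces $\theta_k\sim b^{-k\delta}$ with $\delta>0$. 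Normalizing $a_{i,k}=\mu_{i,k}\tilde a_{i,k}$ with $\tilde a_{i,k}$ a genuine $(\vec{p},\,q,\,s)$-atom associated with $x_0^{(i)}+A^kB_{\ell_i}$ then yields an atomic decomposition $f=\sum_{i}\sum_{k}\lambda_i\mu_{i,k}\tilde a_{i,k}$, with convergence in $\cs'$ guaranteed by the geometric control.

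Finally I would pass from this atomic decomposition back to the molecular quasi-norm. By Lemma \ref{l3.1}, $\|f\|_{H^{\vec{p}}_{A}}$ is dominated by the atomic quasi-norm of the coefficients $\{\lambda_i\mu_{i,k}\}$. Using $|\mu_{i,k}|\ls\theta_k\,b^{-k/q}\,\|\chi_{x_0^{(i)}+A^kB_{\ell_i}}\|_{L^{\vec{p}}}/\|\chi_{B^{(i)}}\|_{L^{\vec{p}}}$ together with the pointwise domination $\chi_{x_0^{(i)}+A^kB_{\ell_i}}\ls[b^kM_{\mathrm{HL}}(\chi_{B^{(i)}})]^{\theta}$, valid for any $\theta\in(0,\,1]$ and with $\theta\in(0,\,p_-)$ fixed, the inner series in $k$ can be summed (the geometric factor beating the measure growth $b^k$), reducing the atomic quasi-norm to a quantity of the form $\|\{\sum_i[M_{\mathrm{HL}}(g_i)]^{\theta\eta}\}^{1/\eta}\|_{L^{\vec{p}}}$ with $g_i=(|\lambda_i|/\|\chi_{B^{(i)}}\|_{L^{\vec{p}}})^{1/\theta}\chi_{B^{(i)}}$. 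At this point the key Lemma \ref{l4.1x} is what recasts this mixed-norm expression into a tractable form, and the Fefferman--Stein vector-valued maximal inequality (Lemma \ref{l3.5}, applicable since $\vec{p}/\theta\in(1,\,\infty)^n$) strips off the maximal operators, leaving $\|\{\sum_i[|\lambda_i|\chi_{B^{(i)}}/\|\chi_{B^{(i)}}\|_{L^{\vec{p}}}]^{\eta}\}^{1/\eta}\|_{L^{\vec{p}}}\sim\|f\|_{H^{\vec{p},\,q,\,s,\,\varepsilon}_{A,\,\mathrm{mol}}}$; the quasi-subadditivity of $\|\cdot\|_{L^{\vec{p}}}$ from Lemma \ref{r2.1} is used repeatedly to interchange sums and norms.

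I expect the main obstacle to be the size estimate for the correcting polynomials $P_{i,k}$ and the resulting geometric decay of $\theta_k$: one must carefully track the anisotropic growth $\lambda_+^{k(s+1)}$ of the monomials on the dilated balls against the molecular decay $b^{-k\varepsilon}$ and the measure growth $b^{k/q'}$, and it is the balance of these three competing factors that dictates and is resolved by the hypothesis $\varepsilon>\max\{1,\,(s+1)\log_b(\lambda_+)\}$. A secondary delicate point is the selection of the auxiliary exponent $\theta$ in the final step, which must simultaneously satisfy $\theta<p_-$ (so that Lemma \ref{l3.5} applies in its nontrivial range $\vec{p}/\theta\in(1,\,\infty)^n$) and keep the geometric series in $k$ convergent after the pointwise maximal-function domination.
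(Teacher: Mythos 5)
You take a genuinely different route from the paper: you convert each molecule into atoms supported on the expanded balls $x_0^{(i)}+A^kB_{\ell_i}$ (annular splitting plus correcting polynomials) and then appeal to the atomic theory, whereas the paper never decomposes molecules at all --- it estimates $M_N(\mathfrak{M}_i)$ directly, treating the near part on $A^{2\sigma}B^{(i)}$ with Lemma \ref{l4.1x} and the far part with the pointwise bound \eqref{e3.12}, whose exponent $\theta>1/\underline{p}$ in \eqref{e3.12.1} comes from the order-$(s+1)$ Taylor remainder, followed by Lemma \ref{l3.5}. The difficulty is that your route cannot reach the range of $\varepsilon$ stated in Theorem \ref{t2.6}; it needs strictly more decay, in two places. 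First, your correcting polynomials $P_{i,k}$ are controlled by the tail moments $\int_{(x_0^{(i)}+A^kB_{\ell_i})^{\complement}}\mathfrak{M}_i(x)x^{\gamma}dx$, $|\gamma|\le s$, and the only estimate the molecular conditions provide is $\sum_{j>k}\|\mathfrak{M}_i\|_{L^q(U_j)}\|x^{\gamma}\|_{L^{q'}(U_j)}\ls\sum_{j>k}b^{-j\varepsilon}\lambda_+^{j|\gamma|}b^{j/q'}(\cdots)$, which converges and decays geometrically in $k$ only when $\varepsilon>s\log_b\lambda_++1/q'$ (by your own accounting you would even need $\varepsilon>(s+1)\log_b\lambda_++1/q'$). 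Because of the extra $1/q'$, neither condition follows from $\varepsilon>\max\{1,(s+1)\log_b\lambda_+\}$, so your claim that the stated hypothesis ``forces $\theta_k\sim b^{-k\delta}$'' is arithmetically false: already for $A=2\mathrm{I}_{2\times2}$, $\vec{p}=(1/4,1/4)$, $q=\infty$ and the minimal $s=6$, the theorem permits $\varepsilon\in(3.5,4]$ while your series needs $\varepsilon>4$; a molecule may saturate its size condition out to an arbitrarily large scale, so its tail moments, though finite, are not uniformly controlled by the molecular normalization. (The paper's estimates \eqref{e.t00}--\eqref{e.t1} survive such $\varepsilon$ precisely because the Schwartz function contributes an extra factor $b^{-Nj}$ across the annuli, so only $\varepsilon>(s+1)\log_b\lambda_++1/q'-1$ is required; your raw moment integrals have no such help.) Second, and independently, your coefficients satisfy $|\mu_{i,k}|\sim\theta_kb^{-k/q}\|\chi_{x_0^{(i)}+A^kB_{\ell_i}}\|_{L^{\vec{p}}}/\|\chi_{B^{(i)}}\|_{L^{\vec{p}}}$, and the norm ratio grows like $b^{k/p_-}$ (exactly $b^{k/p_-}$ when all $p_i$ coincide). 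Since the atomic quasi-norm dominates every single coefficient, boundedness forces $\varepsilon+1/q>1/\underline{p}$ --- the anisotropic analogue of the classical Taibleson--Weiss hypothesis --- which again fails in the same example for $\varepsilon\in(3.5,4)$: there your coefficients blow up as $k\to\infty$.

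There is also an internal inconsistency in your final Fefferman--Stein step. With $\theta\in(0,p_-)$ fixed, the quantity $\|\{\sum_{i}[M_{\mathrm{HL}}(g_i)]^{\theta\eta}\}^{1/\eta}\|_{L^{\vec{p}}}$ rescales, via Lemma \ref{r2.1}, to $\|\{\sum_{i}[M_{\mathrm{HL}}(g_i)]^{u}\}^{1/u}\|_{L^{\theta\vec{p}}}^{\theta}$ with $u=\theta\eta<1$ and exponent vector $\theta\vec{p}$ --- not $\vec{p}/\theta$ --- so Lemma \ref{l3.5}, which requires $u\in(1,\infty]$ and all components of the exponent vector in $(1,\infty)$, is inapplicable; Lemma \ref{l4.1x} does not help either, since it concerns $L^q$-normalized functions supported on dilates of balls, not powers of maximal functions. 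To make Fefferman--Stein work, the power on $M_{\mathrm{HL}}$ must exceed $1/\underline{p}$, as in \eqref{e3.12.1}; but then the geometric series in $k$ in your argument requires $\varepsilon+1/q>1/\underline{p}$, which is the same obstruction as above. In summary, your construction proves a molecular characterization only under a stronger hypothesis, roughly $\varepsilon>\max\{s\log_b\lambda_++1/q',\,1/\underline{p}-1/q\}$; to obtain the full range asserted in Theorem \ref{t2.6}, one is forced back to the paper's direct grand-maximal-function argument.
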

\begin{remark}\label{r2.7}
Let $\vec{p}\in (0,\,1]^n$.
\begin{enumerate}
\item[(i)]Liu et al. \cite{lyy17} introduced the anisotropic Hardy-Lorentz space $H^{{p},\,q}
_A$, where $p\in(0,\,1]$ and $q\in(0,\infty]$.
When $\vec{p}:=\overbrace{\{p,\,\ldots,p\}}^{n\,\mathrm{times}}$ with $p\in(0,\,1]$,  the
molecular characterization of $H^{\vec{p}}_{A}$ in Theorem \ref{t2.6} is reduced to the molecular characterization of anisotropic Hardy spaces $H^{p}_{A}$=$H^{p,\,p}_{A}$ in \cite[Theorem 3.9]{lyy16}.
\item[(ii)] When it comes back to the isotropic setting, i.e., $A:=2{\rm I}_{n\times n}$, the
 molecular characterization of $H^{\vec{p}}_{A}$ in Theorem \ref{t2.6} is still new.
\end{enumerate}
\end{remark}

To show Theorem \ref{t2.6}, we need the following lemma.
By a similar proof of \cite[Lemma 4.5]{hlyy20}, we obtain the  following useful conclusion; the  details are omitted.
\begin{lemma}\label{l4.1x}
Let $\vec{p}\in (0,\,\infty)^n$ and $q\in(1,\,\infty]\cap(p_{+},\,\infty]$ with $p_+$ as in \eqref{e2.5}. Assume that
$\{\lambda_{i}\}_{i\in\nn}\subset\ccc$, $\{B^{(i)}\}_{i\in\nn}\subset\mathfrak{B}$ and $\{a_{i}\}_{i\in\nn}\in L^q$
satisfy, for any $i\in\mathbb{N}$, \,$\mathrm{supp}\, a_i\subset A^{j_0}B^{(i)}$ with some fixed $j_0\in\zz$,
$\|a\|_{L^q}\le \frac{|B^{(i)}|^{1/q}}{\|\chi_{B^{(i)}}\|_{L^{\vec{p}}}}$
and
$$\lf\|\lf\{\sum_{i\in\nn} \lf[\frac{|\lambda_i|\chi_{{B^{(i)}}}}{\|\chi_{{B^{(i)}}}\|_{L^{\vec{p}}}}\r]
^{\eta}\r\}^{1/\eta}\r\|
_{L^{\vec{p}}}<\infty.$$
Then
$$\lf\|\lf[\sum_{i\in\nn}|\lambda_{i}a_{i}|^{\eta}\r]^{1/\eta}\r\|
_{L^{\vec{p}}}\leq C\lf\|\lf\{\sum_{i\in\nn} \lf[\frac{|\lambda_i|\chi_{{B^{(i)}}}}{\|\chi_{{B^{(i)}}}\|_{L^{\vec{p}}}}\r]
^{\eta}\r\}^{1/\eta}\r\|
_{L^{\vec{p}}},$$
where $\eta\in(0,\,\min\{1,\,p_{-}\})$ and $C$ is a positive constant independent of $\{\lambda_{i}\}_{i\in\nn}$, $\{B^{(i)}\}_{i\in\nn}$ and $\{a_{i}\}_{i\in\nn}$.
\end{lemma}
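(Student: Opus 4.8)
The plan is to dualize the mixed norm and reduce the whole estimate to the boundedness of the Hardy--Littlewood maximal operator furnished by Lemma \ref{l3.5}. Write $\mu_i:=|\lambda_i|/\|\chi_{B^{(i)}}\|_{L^{\vec{p}}}$, so the right-hand side equals $\|(\sum_{i\in\nn}\mu_i^\eta\chi_{B^{(i)}})^{1/\eta}\|_{L^{\vec{p}}}$. Because $\eta\in(0,\,\underline{p})$ and $q\in(p_+,\,\infty]$, one has $\eta<p_-\le p_+<q$, whence $\vec{p}/\eta\in(1,\,\infty)^n$ and $q/\eta\in(1,\,\infty]$. By the homogeneity identity $\||h|^r\|_{L^{\vec{p}}}=\|h\|^r_{L^{r\vec{p}}}$ of Lemma \ref{r2.1} (with $r=1/\eta$), it suffices to establish the single-exponent bound
\[
\lf\|\sum_{i\in\nn}|\lambda_i a_i|^\eta\r\|_{L^{\vec{p}/\eta}}\ls\lf\|\sum_{i\in\nn}\mu_i^\eta\chi_{B^{(i)}}\r\|_{L^{\vec{p}/\eta}},
\]
since raising both sides to the power $1/\eta$ then recovers the claim.

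Second, I would exploit the duality of mixed-norm Lebesgue spaces (Benedek--Panzone \cite{bp61}): as $\vec{p}/\eta\in(1,\,\infty)^n$, its norm is realized by testing against $g$ with $\|g\|_{L^{(\vec{p}/\eta)'}}\le1$. The core is then a localized H\"older estimate for a single index $i$. Applying H\"older with exponents $q/\eta$ and $(q/\eta)'$ on $\supp a_i\subset A^{j_0}B^{(i)}$, the size condition $\|a_i\|_{L^q}\le|B^{(i)}|^{1/q}/\|\chi_{B^{(i)}}\|_{L^{\vec{p}}}$, and the identity $\eta/q+1/(q/\eta)'=1$, I would obtain
\[
\int_\rn|\lambda_i a_i|^\eta g\ls\mu_i^\eta\,|B^{(i)}|\lf[\frac{1}{|A^{j_0}B^{(i)}|}\int_{A^{j_0}B^{(i)}}|g|^{(q/\eta)'}\r]^{1/(q/\eta)'}.
\]
Since $A^{j_0}B^{(i)}$ is concentric with $B^{(i)}$ and $|A^{j_0}B^{(i)}|=b^{j_0}|B^{(i)}|$, for every $z\in B^{(i)}$ the displayed average is $\ls M_{\mathrm{HL}}(|g|^{(q/\eta)'})(z)$ with a constant depending only on the fixed $j_0$; integrating this bound over $B^{(i)}$ converts the prefactor $\mu_i^\eta|B^{(i)}|$ into $\int_\rn\mu_i^\eta\chi_{B^{(i)}}[M_{\mathrm{HL}}(|g|^{(q/\eta)'})]^{1/(q/\eta)'}$.

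Third, I would sum over $i$ and apply H\"older in the mixed norm with the conjugate pair $\vec{p}/\eta$ and $(\vec{p}/\eta)'$, bounding the result by $\|\sum_{i\in\nn}\mu_i^\eta\chi_{B^{(i)}}\|_{L^{\vec{p}/\eta}}$ times $\|[M_{\mathrm{HL}}(|g|^{(q/\eta)'})]^{1/(q/\eta)'}\|_{L^{(\vec{p}/\eta)'}}$. The second factor is controlled by again invoking Lemma \ref{r2.1} to peel off the power $1/(q/\eta)'$, followed by the maximal inequality of Lemma \ref{l3.5} (single-function case) on $L^{(\vec{p}/\eta)'/(q/\eta)'}$; the arithmetic $p_j<q$ guarantees $(\vec{p}/\eta)'/(q/\eta)'\in(1,\,\infty)^n$, so this factor is $\ls\|g\|_{L^{(\vec{p}/\eta)'}}\le1$. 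Taking the supremum over admissible $g$ yields the single-exponent bound, hence the lemma; the case $q=\infty$ is identical with $(q/\eta)'=1$.

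The main obstacle I anticipate is not any individual estimate but the simultaneous bookkeeping of exponents so that every space used lands in the correct range: one must arrange $\vec{p}/\eta\in(1,\,\infty)^n$ for both the duality and the vector-valued inequality, $q/\eta>1$ for the inner H\"older step, and $(\vec{p}/\eta)'/(q/\eta)'\in(1,\,\infty)^n$ for the final maximal-function bound, all of which rest squarely on the hypotheses $\eta<\underline{p}$ and $q>p_+$. A secondary technical point is the passage from an average over $A^{j_0}B^{(i)}$ to the maximal function evaluated at points of $B^{(i)}$, where the fixed shift $j_0$ must be absorbed into the implicit constant; this is precisely where the uniformity of $j_0$ in $i$ is used.
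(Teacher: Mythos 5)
Your proposal is correct and follows essentially the same route as the paper, which omits the details of Lemma \ref{l4.1x} and refers to the proof of \cite[Lemma 4.5]{hlyy20}: that proof is exactly your scheme of reducing to $L^{\vec{p}/\eta}$ via Lemma \ref{r2.1}, dualizing against $g\in L^{(\vec{p}/\eta)'}$ (Benedek--Panzone), applying H\"older with exponent $q/\eta$ on $A^{j_0}B^{(i)}$ together with the size condition, dominating the resulting averages by $M_{\mathrm{HL}}$ at points of $B^{(i)}$, and closing with the maximal inequality of Lemma \ref{l3.5}. Your exponent bookkeeping ($\eta<\underline{p}$, $q>p_+$, hence $\vec{p}/\eta\in(1,\infty)^n$ and $(\vec{p}/\eta)'/(q/\eta)'\in(1,\infty)^n$) and the absorption of the fixed shift $j_0$ into the constant are precisely the points the cited proof handles in the same way.
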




\begin{proof}[Proof of Theorem \ref{t2.6}]
By the definitions of $(\vec{p},\,q,\,s)$-atom and $(\vec{p},\,q,\,s,\,\varepsilon)$-molecule,
we find that a $(\vec{p},\,\infty,\,s)$-atom is also a $(\vec{p},\,q,\,s,\,\varepsilon)$-molecule, which implies that
$$H^{\vec{p},\,\infty, \,s}_{A}\subset H^{\vec{p},\,q,\,s,\,\varepsilon}_{A,\,\mathrm{mol}}.$$
This, combined with Lemma \ref{l3.1}, further implies that, to prove Theorem \ref{t2.6}, it suffices to show
$H^{\vec{p},\,q,\,s,\,\varepsilon}_{A,\,\mathrm{mol}}\subset H^{\vec{p}}_{A}$.

For any $f\in H^{\vec{p},\,q,\,s,\,\varepsilon}_{A,\,\mathrm{mol}}$, by Definition \ref{d2.5},
we have that there exists a sequence of $(\vec{p},\,q,\,s,\,\varepsilon)$-molecules,
$\{\mathfrak{M}_{i}\}_{i\in\nn}$, associated with dilated balls $\{B^{(i)}\}_{i\in\nn}\subset\mathfrak{B}$,
where $B^{(i)}:=x_i+B_{\ell_i}$ with $x_i\in\rn$ and $\ell_i\in\zz$, such that
 $$f=\sum_{i\in\nn} \lz_i\mathfrak{M}_{i}\,\,\mathrm{in} \,\,\mathcal{S}^{'}$$
and
\begin{align}\label{e3.3}
\|f\|_{H^{\vec{p},\,q,\,s,\,\ez}_{A,\,\mathrm{mol}}}
\thicksim \lf\|\lf\{\sum_{i\in\nn} \lf[\frac{|\lambda_i|\chi_{{B^{(i)}}}}{\|\chi_{{B^{(i)}}}\|_{L^{\vec{p}}}}\r]
^{\eta}\r\}^{1/\eta}\r\|
_{L^{\vec{p}}},
\end{align}
where $\eta\in(0,\,\min\{1,\,p_{-}\})$.
To prove $f\in H^{\vec{p}}_{A}$,
it is easy to see that, for any $N\in\mathbb{N}\cap[\lfloor({1/\underline{p}}-1)\ln b/\ln\lambda_{-}\rfloor+2,\,\infty)$,
\begin{align*}
\|M_{N}(f)\|_{L^{\vec{p}}}^{\underline{p}}&=\lf\|M_{N}\lf(\sum_{i\in \nn}\lambda_{i}\mathfrak{M}_{i}\r)\r\|_{L^{\vec{p}}}^{\underline{p}}
\leq\lf\|\sum_{i\in \nn}|\lambda_{i}|M_{N}(\mathfrak{M}_{i})\r\|_{L^{\vec{p}}}^{\underline{p}}\\
&\leq\lf\|\sum_{i\in \nn}|\lambda_{i}|M_{N}(\mathfrak{M}_{i})\chi_{A^{2\sigma}B^{(i)}}\r\|_{L^{\vec{p}}}^{\underline{p}}+\lf\|\sum_{i\in \nn}|\lambda_{i}|M_{N}(\mathfrak{M}_{i})\chi_{({A^{2\sigma}B^{(i)}})^{\complement}}\r\|_{L^{\vec{p}}}^{\underline{p}}\\
&\leq\lf\|\lf\{\sum_{i\in \nn}\lf[|\lambda_{i}|M_{N}(\mathfrak{M}_{i})\chi_{A^{2\sigma}B^{(i)}}\r]
^{\eta}\r\}^{1/\eta}\r\|_{L^{\vec{p}}}^{\underline{p}}+\lf\|\sum_{i\in \nn}|\lambda_{i}|M_{N}(\mathfrak{M}_{i})\chi_{({A^{2\sigma}B^{(i)}})^{\complement}}\r\|_{L^{\vec{p}}}^{\underline{p}}\\
&=:\mathrm{I_{1}}+\mathrm{I_{2}},
\end{align*}
where $\eta\in(0,\,\min\{1,\,p_{-}\})$ and $A^{2\sigma}B^{(i)}$ is the $A^{2\sigma}$ concentric expanse on $B^{(i)}$, that is, $A^{2\sigma}B^{(i)}:=x_i+A^{2\sigma}B_{\ell_i}$.

To estimate $\mathrm{I_{1}}$, for any $q\in((\max\{p_+,\,1\},\,\infty)$, by
 the boundedness of $M_{N}$ on $L^{q}$ for all $q\in((\max\{p_+,\,1\},\,\infty)$
and H\"{o}lder's inequality,
we have
\begin{align*}
\|M_N(\mathfrak{M}_{i})\|_{L^q}\lesssim\|\mathfrak{M}_{i}\|_{L^q}
\lesssim&\sum_{j\in\nn}\frac{b^{-j\varepsilon}|B^{(i)}|^{1/q}}{\|\chi_{B^{(i)}}\|_{L^{\vec{p}}}}
\thicksim\frac{|B^{(i)}|^{1/q}}{\|\chi_{B^{(i)}}\|_{L^{\vec{p}}}}.
\end{align*}
By this, Lemma \ref{l4.1x}, $q\in((\max\{p_+,\,1\},\,\infty)$ and \eqref{e3.3}, we obtain
\begin{align*}
\mathrm{I_{1}}
&=\lf\|\lf\{\sum_{i\in\nn}\lf[\frac{|\lambda_i|}{\|\chi_{{B^{(i)}}}\|_
{L^{\vec{p}}}}\lf\|\chi_{B^{(i)}}\r\|_{L^{\vec{p}}}M_{N}(\mathfrak{M}_{i})
\chi_{A^{2\sigma}{B^{(i)}}}\r]^{\eta}\r\}^
{1/\eta}\r\|_{L^{\vec{p}}}^{\underline{p}}\\
&\lesssim\lf\|\lf\{\sum_{i\in\nn}\lf[\frac{|\lambda_i|}{\|\chi_{{B^{(i)}}}
\|_{L^{\vec{p}}}}\chi_{{B^{(i)}}}\r]^{\eta}\r\}^
{1/\eta}\r\|_{L^{\vec{p}}}^{\underline{p}}
\thicksim\|f\|_{H^{\vec{p},\,q,\,s,\,\ez}_{A,\,\mathrm{mol}}}^{\underline{p}}.
\end{align*}

To deal with ${\rm{I_{2}}}$, for any $i\in\nn$ and $x\in(x_i+A^{2\sigma} B_{\ell_i}))^\complement$, we need to estimate $M_{N}^0(\mathfrak{M}_i)(x)$, it's proof is similar to that of \cite[(3.48)]{lyy16}. Suppose that $P$ is a polynomial of degree not greater than $s$ which is determined later. By the H\"{o}lder inequality and Definitions \ref{d2.4x}, we know that
\begin{align}\label{e.t00}
&\lf|\mathfrak{M}_i*\varphi_k(x)\r|
=b^{-k}\lf|\int_{\rn}\mathfrak{M}_i(y)\varphi(A^{-k}(x-y))\,dy\r|\\\nonumber
\leq&b^{-k}\sum_{j=0}^{\infty}\lf|\int_{U_j(x_i+B_{\ell_i})}\mathfrak{M}_i(y)
\lf[\varphi(A^{-k}(x-y))-P(A^{-k}(x-y))\r]\,dy\r|\\\nonumber
\leq&b^{-k}\sum_{j=0}^{\infty}\lf\|\mathfrak{M}_i\r\|_{L^q(U_j(x_i+B_{\ell_i}))}
\lf|\int_{U_j(x_i+B_{\ell_i})}
\lf[\varphi(A^{-k}(x-y))-P(A^{-k}(x-y))\r]^{q'}\,dy\r|^{1/{q'}}\\\nonumber
\leq&b^{-k}\sum_{j=0}^{\infty}b^{-j\varepsilon}\frac{|x_i+B_{\ell_i}|^{1/q}}{\lf\|\chi_{x_i+B_{\ell_i}}
\r\|_{L^{\vec{p}}}}b^{(\ell_i+j)/{q'}}\sup_{z\in A^{-k}(x-x_i)+B_{\ell_i+j-k}}
\lf|\varphi(z)-P(z)\r|\\\nonumber
=&b^{\ell_i-k}\sum_{j=0}^{\infty}b^{j(1/{q'}-\varepsilon)}\frac{1}{\lf\|\chi_{x_i+B_{\ell_i}}
\r\|_{L^{\vec{p}}}}\sup_{z\in A^{-k}(x-x_i)+B_{\ell_i+j-k}}
\lf|\varphi(z)-P(z)\r|.\nonumber
\end{align}
Assume that $x\in x_i+B_{\ell_i+2\sigma+m+1}\backslash B_{\ell_i+2\sigma+m}$ for some $m\in\zz_+$. Then, by \eqref{e2.4}, we know that,
for any $k\in\zz$ and $j\in\zz_+$,
\begin{align}\label{e.t0x}
A^{-k}(x-x_i)+B_{\ell_i+j-k}
&\subset A^{-k}(B_{\ell_i+2\sigma+m+1}\backslash B_{\ell_i+2\sigma+m})+B_{\ell_i+j-k}\\\nonumber
&\subset A^{\ell_i+j-k}(B_{2\sigma+m+1}\backslash B_{2\sigma+m}+B_{0})\\\nonumber
&\subset A^{\ell_i+j-k}(B_m)^{\complement}.\nonumber
\end{align}
If $\ell_i\geq k$, let $P=0$. Then we have, for any $N\in\nn$,
\begin{align}\label{e.t0}
\sup_{z\in A^{-k}(x-x_i)+B_{\ell_i+j-k}}|\varphi(z)|
\lesssim&\sup_{z\in A^{\ell_i+j-k}(B_m)^{\complement}}\frac{1}{[1+\rho(z)]^N}\\\nonumber
\lesssim&b^{-N(\ell_i+j-k+m)}.
\end{align}
If $\ell_i<k$, let $P$ be the Taylor expansion of $\varphi$ at the point $A^{-k}(x-x_i)$ of order $s$. By Taylor's theorem, \eqref{e2.3v}, \eqref{e2.4v} and \eqref{e.t0x}, we obtain, for any $N\in\nn$,
\begin{align}\label{e.t1}
&\sup_{z\in A^{-k}(x-x_i)+A^{j-k}B_{\ell_i}}|\varphi(z)-P(z)|\\\nonumber
\lesssim&\sup_{y\in A^{j-k}B_{\ell_i}}\sup_{|\alpha|=s+1}
\lf|\partial^{\alpha}\varphi(A^{-k}(x-x_i)+y)\r||y|^{s+1}\\\nonumber
\lesssim&b^{j(s+1)\log_b^{\lambda_+}}\lambda_-^{(s+1)(\ell_i-k)}
\sup_{z\in A^{-k}(x-x_i)+A^{-k+j}B_{\ell_i}}\frac{1}{[1+\rho(z)]^{N}}\\\nonumber
\lesssim&b^{j(s+1)\log_b^{\lambda_+}}\lambda_-^{(s+1)(\ell_i-k)}
\sup_{z\in A^{\ell_i-k+j}(B_m)^\complement}\frac{1}{[1+\rho(z)]^{N}}\\\nonumber
\lesssim&b^{j(s+1)\log_b^{\lambda_+}}\lambda_-^{(s+1)(\ell_i-k)}
b^{-N(\ell_i-k+j+m)}.\nonumber
\end{align}
Notice that the supremum over $k\leq\ell_i$ has the largest value when $k=\ell_i$. Without loss of generality, we may
assume that $s:=\lfloor(\frac{1}{p_-}-1)\frac{\ln b}{\ln\lambda_-}\rfloor$ and $N := s + 2$. This implies $b\lambda_-^{s+1}
\leq b^N$ and the above supremum over $k>\ell_i$ is
attained when $\ell_i-k+j+m=0$.
By \eqref{e.t00}, \eqref{e.t0}, \eqref{e.t1} and the fact that $\varepsilon>(s+1)\log_b^{\lambda_+}$, we obtain
\begin{align*}
M_N^0(\mathfrak{M}_i)(x)=&\sup_{\varphi\in\mathcal{S}_N}\sup_{k\in\zz}|\mathfrak{M}_i*\varphi_k(x)|\\\nonumber
\lesssim&\frac{b^{\ell_i-k}}{\|\chi_{x_i+B_{\ell_i}}\|_{L^{\vec{p}}}}
\sum_{j=0}^{\infty}b^{(1/{q'}-\varepsilon)j}\max\lf\{b^{-N(\ell_i-k+j+m)},\r.\\ \nonumber
&\ \ \ \lf. b^{j(s+1)\log_b^{\lambda_+}}\lambda_-^{(s+1)(\ell_i-k)}
b^{-N(\ell_i-k+j+m)}\r\}\\\nonumber
\lesssim&\frac{b^{\ell_i-k}}{\|\chi_{x_i+B_{\ell_i}}\|_{L^{\vec{p}}}}
\sum_{j=0}^{\infty}b^{-j[\varepsilon-(s+1)\log_b^{\lambda_+}-1/{q'}+1]}
\max\lf\{b^{-mN},\,
(b\lambda_-^{s+1})^{-m}\r\}\\\nonumber
\lesssim&\frac{1}{\|\chi_{x_i+B_{\ell_i}}\|_{L^{\vec{p}}}}
b^{-m}b^{-m(s+1)(\ln\lambda_-/\ln b)}\\\nonumber
\lesssim&\frac{1}{\|\chi_{x_i+B_{\ell_i}}\|_{L^{\vec{p}}}}
b^{\ell_i[(s+1)(\ln\lambda_-/\ln b)+1]}b^{-(\ell_i+\sigma+m)[(s+1)(\ln\lambda_-/\ln b)+1]}\\\nonumber
\lesssim&\frac{1}{\|\chi_{x_i+B_{\ell_i}}\|_{L^{\vec{p}}}}
|x_i+B_{\ell_i}|^{[(s+1)(\ln\lambda_-/\ln b)+1]}[\rho(x-x_i)]^{-[(s+1)(\ln\lambda_-/\ln b)+1]}\\\nonumber
\thicksim&\lf\|\chi_{B^{(i)}}\r\|^{-1}_{L^{\vec{p}}}
\frac{\lf|{B^{(i)}}\r|^\theta}{[\rho(x-x_i)]^\theta}\\\nonumber
\lesssim&\lf\|\chi_{B^{(i)}}\r\|^{-1}_{L^{\vec{p}}}
\lf[M_{\mathrm{HL}}(\chi_{B^{(i)}})(x)\r]^\theta,\nonumber
\end{align*}
where, for any $i\in\nn$, $x_i$ denotes the centre of the dilated ball $B^{(i)}$ and
\begin{eqnarray}\label{e3.12.1}
\theta:=\lf(\frac{\ln b}{\ln \lambda_-}+s+1\r)\frac{\ln \lambda_-}{\ln b}>\frac{1}{\underline{p}}.
\end{eqnarray}
By this and the fact that, for any $x\in\rn$, $M_{N}(f)(x)\thicksim M_{N}^0(f)(x)$ (see \cite[Proposition 3.10]{b03}), we obtain
\begin{eqnarray}\label{e3.12}
M_{N}(\mathfrak{M}_{i})(x)\ls\lf\|\chi_{B^{(i)}}\r\|^{-1}_{L^{\vec{p}}}
\frac{\lf|{B^{(i)}}\r|^\theta}{[\rho(x-x_i)]^\theta}
\ls\lf\|\chi_{B^{(i)}}\r\|^{-1}_{L^{\vec{p}}}
\lf[M_{\mathrm{HL}}(\chi_{B^{(i)}})(x)\r]^\theta,
\end{eqnarray}
where, for any $i\in\nn$, $x_i$ denotes the centre of the dilated ball $B^{(i)}$ and $\theta$ is as in \eqref{e3.12.1}.
From this, Lemmas \ref{r2.1}, \ref{l3.5}, \eqref{e3.x} and \eqref{e3.3}, we deduce that
\begin{align}\label{e3.10x}
\mathrm{I}_{2}&\lesssim\lf\|\sum_{i\in\nn}\frac{|\lambda_i|}{\|\chi_{B^{(i)}}\|_{L^{\vec{p}}}}[M_{\mathrm{HL}}(\chi_{B^{(i)}})]^{\theta}\r\|_{L^{\vec{p}}}^{\underline{p}}\\
&\thicksim\lf\|\lf\{\sum_{i\in\nn}\frac{|\lambda_i|}{\|\chi_{B^{(i)}}\|_{L^{\vec{p}}}}[M_{\mathrm{HL}}(\chi_{B^{(i)}})]^{\theta}\r\}^{1/\theta}\r\|_{L^{\theta \vec{p}}}^{{\theta}{\underline{p}}}\nonumber\\
&\lesssim\lf\|\lf\{\sum_{i\in\nn}\frac{|\lambda_i|\chi_{B^{(i)}}}{\|\chi_{B^{(i)}}\|_{L^{\vec{p}}}}\r\}^{1/\theta}\r\|_{L^{\theta \vec{p}}}^{{\theta}{\underline{p}}}\nonumber
\thicksim\lf\|\sum_{i\in\nn}\frac{|\lambda_i|\chi_{B^{(i)}}}{\|\chi_{B^{(i)}}
\|_{L^{\vec{p}}}}\r\|_{L^{\vec{p}}}^{\underline{p}}\nonumber\\
&\lesssim\lf\|\lf\{\sum_{i\in\nn}\lf[\frac{|\lambda_i|\chi_{B^{(i)}}}
{\|\chi_{B^{(i)}}\|_{L^{\vec{p}}}}\r]^{\eta}\r\}^{1/\eta}\r\|_{L^{\vec{p}}}^{\underline{p}}\nonumber
\thicksim\|f\|_{H^{\vec{p},\,q,\,s,\,\ez}_{A,\,\mathrm{mol}}}^{\underline{p}},\nonumber
\end{align}
where $\eta\in(0,\,\min\{1,\,p_{-}\})$.
This, together with $\mathrm{I_1}$ and $\mathrm{I_2}$, shows that
\begin{align*}
\|f\|_{H_{A}^{\vec{p}}}\thicksim\|M_{N}(f)\|_{L^{\vec{p}}}\lesssim\|f\|_{H^{\vec{p},\,q,\,s,\,\ez}_{A,\,\mathrm{mol}}}.
\end{align*}
This implies that $f\in H^{\vec{p}}_{A}$ and hence
$H^{\vec{p},\,q,\,s,\,\varepsilon}_{A,\,\mathrm{mol}}\subset H^{\vec{p}}_{A}$. This finishes the proof of Theorem \ref{t2.6}.
\end{proof}

\section{Application}\label{s5}
\hskip\parindent
In this section, as an application, we obtain a boundedness criterion for some linear
operators from  $H^{\vec{p}}_{A}$ to itself.
Particularly, when $A:=2\rm I_{n\times n}$, this result is still new.

\begin{theorem}\label{t4.2}
Let $\vec{p}\in (0,\,\infty)^n$, $q\in(1,\,\infty]\cap(p_{+},\,\infty)$ with $p_+$ as in \eqref{e2.5}, and $s$ be as in \eqref{e2.10}. Suppose that $T$ is a bounded linear operator on $L^{r}$, for any $r\in(1,\,\infty]$. If for any $(\vec{p},\,q,\,s)$-atom $a$, supported on dilated ball $B^{(i_0)}\in\mathfrak{B}$, as in Definition \ref{d3.1}, $T(a)$ is a harmless constant multiple of a $(\vec{p},\,q,\,s,\,\varepsilon)$-molecule, associated with dilated ball $B^{(i_0)}\in\mathfrak{B}$, where $\varepsilon>(s+1)\log_b^{\lambda_+}$,
then $T$ extends uniquely to a bounded linear operator on $H^{\vec{p}}_{A}$. Moreover, there exists a positive constant $C$ such that, for all
$f\in H^{\vec{p}}_{A}$,
\begin{eqnarray}\label{e4.4x}
\|T(f)\|_{H^{\vec{p}}_{A}}\leq C\, \|f\|_{H^{\vec{p}}_{A}}.
\end{eqnarray}
\end{theorem}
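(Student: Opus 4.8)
The plan is to deduce the boundedness of $T$ directly from the interplay between the atomic characterization of $H^{\vec{p}}_A$ (Lemma \ref{l3.1}, equivalently Theorem \ref{t3.10}) and the molecular characterization established in Theorem \ref{t2.6}. The guiding idea is that the hypothesis on $T$ says precisely that $T$ maps atoms to (constant multiples of) molecules \emph{attached to the very same dilated balls}; hence applying $T$ to an atomic decomposition of $f$ produces a molecular decomposition of $Tf$ with essentially the same coefficient sequence $\{\lambda_i\}_{i\in\nn}$ and the same balls $\{B^{(i)}\}_{i\in\nn}$, so the two quasi-norms are comparable and Theorem \ref{t2.6} closes the loop.

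Concretely, I would first prove the estimate \eqref{e4.4x} on the dense subspace of finite linear combinations of $(\vec{p},\,q,\,s)$-atoms. If $f=\sum_{i=1}^{M}\lambda_i a_i$ is such a combination, then $f\in L^q$ (recall $q\in(1,\,\infty)\cap(p_+,\,\infty)$ is finite), so $T(f)$ is unambiguously defined via the assumed $L^q$-boundedness of $T$, and by linearity $T(f)=\sum_{i=1}^{M}\lambda_i T(a_i)$. By hypothesis each $T(a_i)$ is a harmless constant multiple of a $(\vec{p},\,q,\,s,\,\varepsilon)$-molecule associated with $B^{(i)}$; absorbing that constant into the implicit constants, $\sum_{i=1}^{M}\lambda_i T(a_i)$ is exactly a (finite) molecular decomposition with coefficients $\{\lambda_i\}$ and balls $\{B^{(i)}\}$. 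Consequently, by Definition \ref{d2.5} and then Theorem \ref{t2.6},
\begin{align*}
\|T(f)\|_{H^{\vec{p}}_{A}}
&\thicksim\|T(f)\|_{H^{\vec{p},\,q,\,s,\,\varepsilon}_{A,\,\mathrm{mol}}}
\ls\lf\|\lf\{\sum_{i\in\nn}\lf[\frac{|\lambda_i|\chi_{B^{(i)}}}{\|\chi_{B^{(i)}}\|_{L^{\vec{p}}}}\r]^{\eta}\r\}^{1/\eta}\r\|_{L^{\vec{p}}}
\thicksim\|f\|_{H^{\vec{p}}_{A}},
\end{align*}
where $\eta\in(0,\,\min\{1,\,p_-\})$ and, for the last equivalence, one takes the infimum over atomic decompositions and invokes Lemma \ref{l3.1}.

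The remaining task is the extension: since finite atomic combinations are dense in $H^{\vec{p}}_A$ and $H^{\vec{p}}_A$ is complete, the a priori bound above lets one extend $T$ uniquely to a bounded linear operator on all of $H^{\vec{p}}_A$, with the same constant in \eqref{e4.4x}. \textbf{The main obstacle} I anticipate is exactly this passage from finite to infinite decompositions, namely justifying that the extended operator coincides with term-by-term application $T(f)=\sum_{i\in\nn}\lambda_i T(a_i)$ for a general $f\in H^{\vec{p}}_A$. The difficulty is that an atomic decomposition of $f$ converges only in $\cs'$ (or $\mathcal{S}_\infty'$), not necessarily in $L^q$, so one cannot naively interchange $T$ with the infinite sum using $L^q$-continuity alone. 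I would resolve this by verifying, through the molecular quasi-norm control together with the completeness of $H^{\vec{p}}_A$, that the partial sums form a Cauchy sequence whose $H^{\vec{p}}_A$-limit is independent of the chosen decomposition; the molecular series then converges in $\cs'$ (as required by Definition \ref{d2.5}) to this limit, which is what identifies the abstract extension with the term-by-term image and guarantees uniqueness.
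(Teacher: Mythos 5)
Your overall strategy (turn an atomic decomposition of $f$ into a molecular decomposition of $T(f)$ with the same coefficients and balls, quote Theorem \ref{t2.6}, then extend by density and completeness) is viable, and your extension step is essentially the paper's. But there is a genuine gap in the dense-subspace estimate, exactly at the point where you claim the last equivalence follows by ``taking the infimum over atomic decompositions and invoking Lemma \ref{l3.1}''. The only infimum your argument can access is the one over \emph{finite} atomic decompositions of $f$: only for finite sums does plain linearity give $T(f)=\sum_{i=1}^{M}\lambda_i T(a_i)$, which is what lets $T(f)$ inherit a molecular decomposition. That infimum is the finite atomic quasi-norm $\|f\|_{H^{\vec{p},\,q,\,s}_{A,\,\mathrm{fin}}}$ of Definition \ref{d5.2}, and its equivalence with $\|f\|_{H^{\vec{p}}_{A}}$ is \emph{not} Lemma \ref{l3.1}; it is the separate, nontrivial finite atomic characterization, Lemma \ref{l4.0y}(i). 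Lemma \ref{l3.1} concerns the infimum over all countable decompositions converging in $\cs'$, and if you try to use those, you hit the interchange problem you yourself flag --- but it arises already here, in the a priori estimate, not only in the extension step: a series converging merely in $\cs'$ cannot be pushed through $T$ using $L^q$-boundedness, since $T$ is not defined on $\cs'$. The distinction between finite and countable atomic quasi-norms is precisely the known subtlety that forces finite atomic characterizations to be proved separately in this area, so the miscitation is a conceptual gap rather than a typo.

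Both repairs are available inside the paper. Either (a) keep your finite-combination argument and replace Lemma \ref{l3.1} by Lemma \ref{l4.0y}(i), which applies because $q<\infty$ under the hypotheses of Theorem \ref{t4.2}; or (b) do what the paper actually does: for $f\in H^{\vec{p},\,q,\,s}_{A,\,\mathrm{fin}}\subset H^{\vec{p}}_{A}\cap L^q$, invoke Lemma \ref{l4.1yx} to get a \emph{countable} atomic decomposition $f=\sum_{i\in\nn}\lambda_i a_i$ converging in $L^q$, with coefficients controlled as in \eqref{e4.21} directly by $\|f\|_{H^{\vec{p}}_{A}}$ (no infimum needed); the $L^q$-boundedness of $T$ then legitimately yields $T(f)=\sum_{i\in\nn}\lambda_i T(a_i)$ in $L^q$, hence in $\cs'$. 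Note that with repair (b), your idea of quoting Theorem \ref{t2.6} as a black box is in fact tidier than the paper's own proof, which instead re-derives the molecular estimates from scratch (the splitting into $\mathrm{K_1}$ and $\mathrm{K_2}$ and the pointwise bound \eqref{e4.13}). Once the bound holds on $H^{\vec{p},\,q,\,s}_{A,\,\mathrm{fin}}$, your density-plus-completeness extension (via Lemma \ref{l4.2xx}) goes through as in the paper, and the identification of the extension with the term-by-term image, while not strictly needed for the statement, can be justified along the lines you sketch.
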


To prove Theorem \ref{t4.2}, we need some definitions and technical lemmas.

\begin{definition}\label{d5.2}\rm{\cite{hlyy20}}
Let $\vec{p}\in (0,\,\infty)^n$, $q\in(1,\,\infty]$ and
$s\in[\lfloor(1/{p_-}-1) {\ln b/\ln \lambda_-}\rfloor,\,\infty)\cap\zz_+$ with $p_-$ as in \eqref{e2.5}.
 The {\it anisotropic mixed-norm finite atomic Hardy space}
$H^{\vec{p},\,q,\,s}_{A,\,\mathrm{fin}}$
is defined to be the set of all distributions $f\in \cs'$ satisfying that there
exist $I\in\nn$, $\{\lambda_i\}_{i\in[1,\,I]\cap\nn}\subset\ccc$ and a sequence of
$(\vec{p},\,q,\,s)$-atoms, $\{a_i\}_{i\in[1,\,I]\cap\nn}$, supported, respectively,
on $\{{B^{(i)}}\}_{i\in[1,\,I]\cap\nn}\subset\mathfrak{B}$ such that
\begin{align*}
f=\sum_{i=1}^I \lz_{i}a_i \ \ \mathrm{in\ } \ \mathcal{S}'.
\end{align*}
Moreover, for any $f\in H^{\vec{p},\,q,\,s}_{A,\,\rm{fin}}$, let
$$\|f\|_{H^{\vec{p},\,q,\,s}_{A,\,\rm{fin}}}
:=\inf \lf\|\lf\{\sum_{i=1} ^I \lf[\frac{|\lambda_i|\chi_{{B^{(i)}}}}{\|\chi_{{B^{(i)}}}\|
_{L^{\vec{p}}}}\r]^{\eta}\r\}^{1/\eta}\r\|
_{L^{\vec{p}}},$$
where $\eta\in(0,\,\min\{1,\,p_{-}\})$ and the infimum is taken over all the decompositions of $f$ as above.
\end{definition}

\begin{lemma}\label{l4.0y}\rm{\cite[Theorem 5.3]{hlyy20}}
Let $\vec{p}\in (0,\,\infty)^n$, $A$ be a dilation and
$s\in[\lfloor(1/{p_-}-1) {\ln b/\ln \lambda_-}\rfloor,\,\infty)\cap\zz_+$ with $p_-$ as in \eqref{e2.5}
\begin{enumerate}
\item[\rm{(i)}]If $q\in(\max\{p_+,\,1\},\,\infty)$ with $p_+$ as in \eqref{e2.5}, then $\|\cdot\|_{H^{\vec{p},\,q,\,s}_{A,\,\rm{fin}}}$ and $\|\cdot\|_{H^{\vec{p}}_{A}}$ are equivalent quasi-norms on $H^{\vec{p}}_{A}$.
\item[\rm{(ii)}]$\|\cdot\|_{H^{\vec{p},\,\infty,\,s}_{A,\,\rm{fin}}}$ and $\|\cdot\|_{H^{\vec{p}}_{A}}$ are equivalent quasi-norms on $H^{\vec{p},\,\infty,\,s}_{A,\,\rm{fin}}\cap C$,
    where $C$ denotes the set of all continuous functions on $\rn$.
\end{enumerate}
\end{lemma}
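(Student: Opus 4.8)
The plan is to prove the two inequalities separately. The easy bound is $\|f\|_{H^{\vec p}_A}\le C\|f\|_{H^{\vec p,q,s}_{A,\mathrm{fin}}}$, which holds because a finite atomic representation is in particular an (infinite) one, so Lemma \ref{l3.1} (applicable since $q\in(\max\{p_+,1\},\infty]$) gives $\|f\|_{H^{\vec p}_A}\sim\|f\|_{H^{\vec p,q,s}_A}\le\|f\|_{H^{\vec p,q,s}_{A,\mathrm{fin}}}$ for every $f$ in the finite atomic space (resp. in $H^{\vec p,\infty,s}_{A,\mathrm{fin}}\cap C$ for part (ii)). The substance is the reverse inequality $\|f\|_{H^{\vec p,q,s}_{A,\mathrm{fin}}}\le C\|f\|_{H^{\vec p}_A}$, which I would establish by a Meda--Sj\"ogren--Taibleson truncation of the Calder\'on--Zygmund atomic decomposition. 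So fix $f$ in the finite atomic space and normalise $\|f\|_{H^{\vec p}_A}=1$; being a finite combination of atoms, $f$ is supported in a single dilated ball $B^*=x_0+B_{j_0}$ and lies in $L^q$ (in $L^\infty\cap C$ in case (ii)).

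First I would run the anisotropic Calder\'on--Zygmund decomposition underlying Lemma \ref{l3.1} (cf. \cite{b03,hlyy20}). For each $s\in\zz$ put $\Omega_s:=\{x\in\rn:\,M_N(f)(x)>b^s\}$. Because $f$ has compact support and vanishing moments up to order $s$, the decay estimate $M_N(f)(x)\ls[\rho(x-x_0)]^{-\theta}$ (the same computation that produced \eqref{e3.12.1}, with $\theta>1/\underline p$) shows that every $\Omega_s$ is a bounded open set. A Whitney covering then yields $\infty$-atoms $a_i^s$, supported in dilated balls $B_i^s\subset\Omega_s$ of bounded overlap, with $\|a_i^s\|_{L^\infty}\ls\|\chi_{B_i^s}\|_{L^{\vec p}}^{-1}$ and coefficients $|\lambda_i^s|\sim b^s\|\chi_{B_i^s}\|_{L^{\vec p}}$ (so that $\|\lambda_i^s a_i^s\|_{L^\infty}\ls b^s$), such that $f=\sum_{s\in\zz}\sum_i\lambda_i^s a_i^s$ in $\cs'$ and $\big\|\{\sum_{s,i}[|\lambda_i^s|\chi_{B_i^s}\|\chi_{B_i^s}\|_{L^{\vec p}}^{-1}]^{\eta}\}^{1/\eta}\big\|_{L^{\vec p}}\sim\|f\|_{H^{\vec p}_A}=1$, where $\eta\in(0,\underline p)$.

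Next I would truncate the levels. For a large integer $L$ set $f_L:=\sum_{|s|\le L}\sum_i\lambda_i^s a_i^s$, $\ell_L:=\sum_{s<-L}\sum_i\lambda_i^s a_i^s$ and $g_L:=\sum_{s>L}\sum_i\lambda_i^s a_i^s$. Since each $\Omega_s$ is bounded and covered with bounded overlap, $f_L$ is a genuinely finite sum of atoms. The low tail $\ell_L$ is a harmless constant multiple of a single $\infty$-atom: it is supported in $\Omega_{-L}$, inherits the vanishing moments of $f$ and $f_L$, and $\|\ell_L\|_{L^\infty}\ls\sum_{s<-L}b^s\sim b^{-L}$, so its atomic coefficient $\mu_L$ satisfies $\mu_L\ls b^{-L}\|\chi_{\Omega_{-L}}\|_{L^{\vec p}}=\|b^{-L}\chi_{\Omega_{-L}}\|_{L^{\vec p}}\le\|M_N(f)\|_{L^{\vec p}}=1$, where I used $b^{-L}\chi_{\Omega_{-L}}\le M_N(f)$ pointwise. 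The high tail $g_L$ is where the two cases diverge. In case (ii), $f\in L^\infty$ gives $M_N(f)\le C\|f\|_{L^\infty}$, so $\Omega_s=\emptyset$ once $b^s>C\|f\|_{L^\infty}$ and hence $g_L=0$ for large $L$; the hypothesis $f\in C$ lets one arrange the Whitney atoms, and therefore $\ell_L$, to be continuous, so $\ell_L$ is a bona fide $(\vec p,\infty,s)$-atom. In case (i), the pointwise bound $|g_L|\ls M_N(f)\chi_{\Omega_L}$ together with $M_N(f)\in L^q$ gives $\|g_L\|_{L^q}\to0$, and since $q>p_+$ the ratio $\|\chi_{\Omega_L}\|_{L^{\vec p}}|\Omega_L|^{-1/q}$ tends to $0$ as $|\Omega_L|\to0$; hence $g_L$ is a single $(\vec p,q,s)$-atom whose coefficient $\nu_L$ tends to $0$.

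Collecting these, $f=f_L+\ell_L+g_L$ is, for each large $L$, a finite atomic decomposition. By the quasi-subadditivity of the $L^{\vec p}$ sequence functional (Lemma \ref{r2.1} and \eqref{e3.x}), together with $\|f_L\|_{H^{\vec p,q,s}_{A,\mathrm{fin}}}\le(\text{full sequence norm})\ls1$ and $\mu_L,\nu_L\ls1$, one obtains $\|f\|_{H^{\vec p,q,s}_{A,\mathrm{fin}}}\ls1=\|f\|_{H^{\vec p}_A}$. The main obstacle is the tail analysis in case (i): assembling the high-level terms $g_L$ into a single $q$-atom with vanishing coefficient requires the pointwise domination $|g_L|\ls M_N(f)\chi_{\Omega_L}$ and the tail integrability of $(M_N f)^q$, while converting all the scalar and pointwise size bounds into the mixed-norm sequence quasi-norm $\|\cdot\|_{L^{\vec p}}$ cannot be carried out by scalar $L^p$ reasoning and must instead be routed through Lemma \ref{l4.1x} and the Fefferman--Stein vector-valued inequality (Lemma \ref{l3.5}), checking at each step the compatibility of $\eta\in(0,\underline p)$ with the decay exponent $\theta$ of \eqref{e3.12.1}.
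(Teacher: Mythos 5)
Note first that the paper contains no proof of this lemma: it is imported verbatim from \cite[Theorem 5.3]{hlyy20}, and the proof there is indeed of the Meda--Sj\"ogren--Taibleson type you outline (Calder\'on--Zygmund decomposition at the levels $b^s$ of $M_N(f)$, plus truncation). So your strategy is the right one in outline, but it contains a genuine gap at its central step. You claim that $f_L:=\sum_{|s|\le L}\sum_i\lambda_i^s a_i^s$ is ``a genuinely finite sum of atoms'' because each $\Omega_s$ is bounded and the covering has bounded overlap. This is false: a Whitney-type covering of a bounded open set $\Omega_s$ consists of dilated balls whose radii are comparable to the distance to $(\Omega_s)^\complement$ and hence shrink to zero near $\partial\Omega_s$; bounded overlap is perfectly compatible with infinitely many balls. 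So after your truncation in the level index $s$ you still face infinitely many atoms at each retained level, and $f=f_L+\ell_L+g_L$ is not a finite decomposition. The correct truncation is in the doubly indexed series, not level by level: in case (i) one uses the pointwise domination $\sum_{s>s'}\sum_i|\lambda_i^s a_i^s|\ls M_N(f)\chi_{\widetilde B}$ with $M_N(f)\ls M_{\mathrm{HL}}(f)\in L^q$ (here $q>1$ and $\widetilde B$ is a fixed dilated ball containing all high-level supports) to extract a finite subfamily whose remainder is small in $L^q$, supported in $\widetilde B$, with vanishing moments, i.e.\ an arbitrarily small multiple of one $(\vec p,q,s)$-atom. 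In case (ii) the same finiteness problem persists even though only finitely many levels occur, and it is exactly here that the hypothesis $f\in C$ must do real work: one splits each level's Whitney balls by size and uses the uniform continuity of $f$ (small oscillation near $\partial\Omega_s$) to make the inner tails uniformly small continuous functions with vanishing moments, hence small continuous $(\vec p,\infty,s)$-atoms. Your proposal uses continuity only to say the atoms ``can be arranged to be continuous,'' which does not address this.

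A second, smaller error: your low tail $\ell_L=\sum_{s<-L}\sum_i\lambda_i^s a_i^s$ is \emph{not} supported in $\Omega_{-L}$. The level-$s$ pieces with $s<-L$ are supported in $\Omega_s$, and $\Omega_s\supset\Omega_{-L}$ grows without bound as $s\to-\infty$ (the union exhausts $\{M_N(f)>0\}$). The support control must instead come from the telescoping identity $\ell_L=f-\sum_{s\ge-L}\sum_i\lambda_i^s a_i^s$, both terms on the right being supported in a bounded set, after which one encloses $\supp\ell_L$ in a single dilated ball $\widetilde B_L\in\mathfrak{B}$; the coefficient must then be measured against $\|\chi_{\widetilde B_L}\|_{L^{\vec p}}$, not $\|\chi_{\Omega_{-L}}\|_{L^{\vec p}}$, and its boundedness follows from the decay $M_N(f)(x)\ls[\rho(x-x_0)]^{-\theta}$ with $\theta>1/\underline{p}$ (as in \eqref{e3.12.1}), not from the pointwise inequality $b^{-L}\chi_{\Omega_{-L}}\le M_N(f)$ you invoke (the vanishing moments of $\ell_L$ also need the $L^q$-convergence of the subtracted series to pass to the limit). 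With these repairs your argument reduces to the proof of \cite[Theorem 5.3]{hlyy20}; as written, it does not establish the lemma.
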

\begin{lemma}\label{l4.1yx}\rm{\cite[Theorem 4.7]{hlyy20}}
 Let $\vec{p}\in (0,\,\infty)^n$, $q\in(1,\,\infty]\cap(p_{+},\,\infty)$ with $p_+$ as in \eqref{e2.5}, $r\in(1,\,\infty]$ and $s\in[\lfloor(1/{p_-}-1) {\ln b/\ln \lambda_-}\rfloor,\,\infty)\cap\zz_+$ with $p_-$ as in \eqref{e2.5}. Then, for any $f\in H^{\vec{p}}_{A}\cap L^r$, there exist $\{\lambda_i\}_{{i\in\nn}}\subset\mathbb{C}$, dilated balls $\{x_i+B_{\ell_i}\}_{i\in\nn}\subset\mathfrak{B}$ and
 $(\vec{p},\,q,\,s)$-atoms $\{a_i\}_{{i\in\nn}}$ such that
 $$f=\sum_{i\in\mathbb N}\lambda_ia_i\ \ {\rm in}\ \ L^q,$$
 where the series also converge almost everywhere.
\end{lemma}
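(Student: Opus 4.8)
The plan is to produce the decomposition by running the anisotropic Calder\'on--Zygmund decomposition of $f$ at the dyadic heights $b^k$, $k\in\zz$, and then to upgrade the resulting convergence from $\mathcal{S}'$ to $L^q$ and to almost everywhere, the latter being where the extra hypothesis $f\in L^r$ enters. For each $k\in\zz$ set $\Omega_k:=\{x\in\rn:\ M_N(f)(x)>b^k\}$; since $f\in H^{\vec p}_A$ we have $M_N(f)\in L^{\vec p}$, so each $\Omega_k$ is open, the family is nested decreasing, $\bigcap_k\Omega_k$ is null, and the sets shrink as $k\to\infty$. Choosing a Whitney covering of $\Omega_k$ by dilated balls $\{x_i^k+B_{\ell_i^k}\}_i$ of bounded overlap with a subordinate smooth partition of unity $\{\zeta_i^k\}_i$, I would set the bad parts $b_i^k:=(f-P_i^k)\zeta_i^k$, where $P_i^k$ is the unique polynomial of degree at most $s$ rendering $b_i^k$ orthogonal to all monomials of degree $\le s$, and the good part $g^k:=f-\sum_i b_i^k$. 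The usual estimates then give $\|g^k\|_{L^\infty}\ls b^k$, the uniform bound $\|b_i^k\|_{L^q}\ls b^k|x_i^k+B_{\ell_i^k}|^{1/q}$, and the $\mathcal{S}'$-limits $g^k\to f$ as $k\to+\infty$ and $g^k\to0$ as $k\to-\infty$.

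Next I would pass to atoms through the telescoping $f=\sum_{k\in\zz}(g^{k+1}-g^k)$ in $\mathcal{S}'$. Each difference splits (following Bownik) as $g^{k+1}-g^k=\sum_i h_i^k$, with $h_i^k$ supported on a fixed dilate of $x_i^k+B_{\ell_i^k}$, having vanishing moments up to order $s$ and $\|h_i^k\|_{L^\infty}\ls b^k$. Writing $\lambda_{k,i}:=b^k\|\chi_{x_i^k+B_{\ell_i^k}}\|_{L^{\vec p}}$ and $a_{k,i}:=\lambda_{k,i}^{-1}h_i^k$ turns each $h_i^k$ into a multiple of a $(\vec p,\infty,s)$-atom, hence a $(\vec p,q,s)$-atom. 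Re-indexing the double family as $\{\lambda_i a_i\}_{i\in\nn}$ yields $f=\sum_{i\in\nn}\lambda_i a_i$ in $\mathcal{S}'$; the coefficient control follows from $\frac{|\lambda_{k,i}|\chi_{x_i^k+B_{\ell_i^k}}}{\|\chi_{x_i^k+B_{\ell_i^k}}\|_{L^{\vec p}}}=b^k\chi_{x_i^k+B_{\ell_i^k}}$, the bounded overlap $\sum_i\chi_{x_i^k+B_{\ell_i^k}}\ls\chi_{\Omega_k}$, and the pointwise comparison $\sum_k b^{k\eta}\chi_{\Omega_k}\sim[M_N(f)]^\eta$, which together bound the atomic quasi-norm by $\|M_N(f)\|_{L^{\vec p}}=\|f\|_{H^{\vec p}_A}$.

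It remains to upgrade the convergence, which is the real content of the lemma. Almost-everywhere convergence is immediate: $g^K=f$ on $\Omega_K^\complement$ and these complements increase to a set of full measure, so $g^K\to f$ a.e., whence the grouped partial sums $g^{K+1}-g^{-K}=\sum_{|k|\le K}(g^{k+1}-g^k)\to f$ a.e. For the $L^q$-limit I would first show $g^{-K}\to0$ in $L^q$ from $\|g^{-K}\|_{L^\infty}\ls b^{-K}$ together with support control, and then $g^K\to f$ in $L^q$ by bounding, via the finite overlap and the projection estimate for $P_i^K$, the mass $\|f-g^K\|_{L^q}^q=\|\sum_i b_i^K\|_{L^q}^q\ls\int_{\Omega_K}|f|^q$, which tends to $0$ as $K\to\infty$ since the $\Omega_K$ shrink and $f$ is integrable on them by virtue of $f\in L^r$. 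Finally, to promote the level-grouped convergence to convergence of the ungrouped series $\sum_i\lambda_i a_i$, I would verify the Cauchy criterion for $\|\sum_{i\in F}\lambda_i a_i\|_{L^q}$ over finite index sets $F$, again using bounded overlap at each level together with summability in $k$.

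The main obstacle is precisely this last $L^q$-upgrade. The telescoping delivers $L^q$-convergence of the sums grouped by level $k$ essentially for free, but convergence of the full, ungrouped atomic series demands a uniform $L^q$-bound on arbitrary finite subsums, and in the mixed-norm setting a single H\"older inequality no longer closes the estimate. I expect to route this through the Fefferman--Stein vector-valued maximal inequality (Lemma \ref{l3.5}) together with the homogeneity and subadditivity of $\|\cdot\|_{L^{\vec p}}$ (Lemma \ref{r2.1}); the genuinely delicate point is reconciling the three exponents $r$, $q$ and $\vec p$ on the shrinking level sets $\Omega_k$ so that both $\int_{\Omega_K}|f|^q\to0$ and the finite-subsum bound hold simultaneously.
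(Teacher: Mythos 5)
Note first that the paper contains no proof of this statement at all: it is imported verbatim, with a citation to \cite[Theorem 4.7]{hlyy20}, and is used later only in the proof of Theorem \ref{t4.2}. So the only benchmark is the proof in the cited reference, and there your skeleton is exactly the standard one (Bownik's Calder\'on--Zygmund machinery adapted to $L^{\vec{p}}$): level sets $\Omega_k=\{M_N(f)>b^k\}$, a Whitney covering by dilated balls with bounded overlap, moment-corrected bad parts $b_i^k$, the telescoping $g^{k+1}-g^k=\sum_i h_i^k$ into $L^\infty$-normalized atoms with $\lambda_{k,i}=b^k\|\chi_{x_i^k+B_{\ell_i^k}}\|_{L^{\vec{p}}}$, and the geometric-series coefficient bound $\sum_k b^{k\eta}\chi_{\Omega_k}\sim[M_N(f)]^\eta$. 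Your $\mathcal{S}'$-convergence, a.e.-convergence and quasi-norm estimates all match that route and are sound.

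The genuine gap is the $L^q$-upgrade, which you correctly identify as the real content and then leave open; as the lemma is literally stated (arbitrary $r\in(1,\infty]$, convergence in $L^q$), it cannot be closed. Your key bound $\|f-g^K\|_{L^q}^q\lesssim\int_{\Omega_K}|f|^q$ requires $f\in L^q$, and $f\in L^r$ with $r\neq q$ does not supply this: $H^{\vec{p}}_{A}\cap L^r\not\subset L^q$ in the mixed-norm setting. For instance, take disjointly supported, $L^\infty$-normalized $(\vec{p},\infty,s)$-atoms $a_j$ on dilated balls $B_j$ filling a set $E$ with $\|\chi_E\|_{L^{\vec{p}}}<\infty$ but $|E|=\infty$ (possible when $p_1<p_2$, using sections of width $x_2^{-\alpha}$ with $p_2\alpha/p_1>1\geq\alpha$), and set $f:=\sum_j\|\chi_{B_j}\|_{L^{\vec{p}}}\,a_j$; then $f\in H^{\vec{p}}_{A}\cap L^\infty$ while $\|f\|_{L^q}^q\gtrsim|E|=\infty$ for every $q<\infty$, and since $L^q$-convergence plus a.e.\ convergence of the atomic series would force $f\in L^q$, no decomposition of this $f$ can converge in $L^q$. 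Likewise your claim that $g^{-K}\to0$ in $L^q$ ``with support control'' is unsupported, since $g^{-K}$ has full support; what actually works is $|f|\lesssim M_N(f)\leq b^{-K}$ off $\Omega_{-K}$ together with $|\Omega_{-K}|\leq b^{Kr}\|M_{\mathrm{HL}}(f)\|_{L^r}^r<\infty$ (using $M_N(f)\lesssim M_{\mathrm{HL}}(f)$ and $r>1$; note $\|\chi_{\Omega_{-K}}\|_{L^{\vec{p}}}<\infty$ alone does not give finite Lebesgue measure), and the resulting power count $b^{-K(q-r)}$ vanishes only when $q>r$. The resolution is that the statement is an imprecise transcription of \cite{hlyy20}: there the $L^q$-convergence is obtained for $f\in H^{\vec{p}}_{A}\cap L^q$, which is also the only case this paper ever uses (it applies the lemma to $f\in H^{\vec{p},\,q,\,s}_{A,\,\mathrm{fin}}\subset L^q$). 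Your argument closes exactly in that case $r=q$; you should either assume this or prove convergence in $L^r$ instead, rather than attempt the three-exponent reconciliation, which is impossible in general.
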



\begin{lemma}\label{l4.2xx}\rm{\cite{hlyy20}}
 Let $\vec{p}\in (0,\,\infty)^n$ and $N\in\mathbb{N}\cap[\lfloor(\frac{1}{\min\{1,\,p_-\}}-1)\frac{\ln b}{\ln \lambda_-}\rfloor+2,\,\infty)$ with $p_-$ as in \eqref{e2.5}. Then $H^{\vec{p}}_{A}$ is complete.
\end{lemma}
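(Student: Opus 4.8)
The plan is to deduce completeness from the standard quasi-Banach criterion. By Lemma \ref{r2.1}, the functional $\|\cdot\|_{L^{\vec p}}^{\underline p}$ is subadditive, where $\underline p$ is as in \eqref{e2.5.1}; combined with the pointwise subadditivity $M_N(g+h)\le M_N(g)+M_N(h)$, this makes $\|\cdot\|_{H^{\vec p}_A}^{\underline p}$ subadditive as well. Hence it suffices to show that every series $\sum_{k\in\nn}f_k$ with $\sum_{k\in\nn}\|f_k\|_{H^{\vec p}_A}^{\underline p}<\infty$ converges in $H^{\vec p}_A$: given a Cauchy sequence $\{g_m\}$, one passes to a subsequence $\{g_{m_j}\}$ with $\|g_{m_{j+1}}-g_{m_j}\|_{H^{\vec p}_A}^{\underline p}\le 2^{-j}$, applies the series statement to $f_j:=g_{m_{j+1}}-g_{m_j}$ to obtain convergence of the subsequence, and then uses that a Cauchy sequence with a convergent subsequence converges.

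First I would record the continuous embedding $H^{\vec p}_A\hookrightarrow\cs'$ in quantitative form: with the seminorm $\|\varphi\|_{\cs_N}:=\sup_{|\alpha|\le N,\,\ell\le N}\|\varphi\|_{\alpha,\ell}$, there is $C\in(0,\fz)$ with
\[
|\la g,\varphi\ra|\le C\,\|\varphi\|_{\cs_N}\,\|g\|_{H^{\vec p}_A}\qquad(g\in H^{\vec p}_A,\ \varphi\in\cs).
\]
Indeed, writing $\la g,\varphi\ra=g*\tilde\varphi_0(0)$ with $\tilde\varphi(z):=\varphi(-z)$, for every $x\in -B_0$ one has $0\in x+B_0$, so $|g*\tilde\varphi_0(0)|\le\|\tilde\varphi\|_{\cs_N}M_N(g)(x)$ by the definitions of $M_\varphi$ and $M_N$ together with the positive homogeneity $M_{c\psi}=|c|M_\psi$; multiplying the left-hand constant by $\chi_{-B_0}$, taking $\|\cdot\|_{L^{\vec p}}$, and using $\|\tilde\varphi\|_{\cs_N}\sim\|\varphi\|_{\cs_N}$ gives the claim with $C\sim\|\chi_{-B_0}\|_{L^{\vec p}}^{-1}$. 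Consequently the partial sums $S_K:=\sum_{k=1}^K f_k$ satisfy, for $K<L$ and any $\varphi\in\cs$,
\[
|\la S_L-S_K,\varphi\ra|^{\underline p}\le C^{\underline p}\,\|\varphi\|_{\cs_N}^{\underline p}\sum_{k=K+1}^{L}\|f_k\|_{H^{\vec p}_A}^{\underline p},
\]
whose right-hand side tends to $0$ as $K\to\fz$. Thus $\{S_K\}$ is Cauchy in $\cs'$ and, by completeness of $\cs'$, converges to some $f\in\cs'$, i.e.\ $f-S_K=\sum_{k>K}f_k$ in $\cs'$.

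The decisive step is to upgrade this $\cs'$-convergence to convergence in $H^{\vec p}_A$. For fixed $\varphi\in\cs_N$, $k\in\zz$ and $y\in x+B_k$, the identity $h*\varphi_k(y)=\la h,\varphi_k(y-\cdot)\ra$ applied with $\varphi_k(y-\cdot)\in\cs$ together with the $\cs'$-convergence yields $(f-S_K)*\varphi_k(y)=\sum_{k'>K}f_{k'}*\varphi_k(y)$; bounding each summand by $M_N(f_{k'})(x)$ and taking the supremum over $\varphi\in\cs_N$, $k\in\zz$ and $y\in x+B_k$ gives the pointwise subadditivity
\[
M_N(f-S_K)(x)\le\sum_{k>K}M_N(f_k)(x),\qquad x\in\rn.
\]
Applying the $\underline p$-triangle inequality of Lemma \ref{r2.1} in $L^{\vec p}$ then gives $\|M_N(f-S_K)\|_{L^{\vec p}}^{\underline p}\le\sum_{k>K}\|f_k\|_{H^{\vec p}_A}^{\underline p}\to0$, so $S_K\to f$ in $H^{\vec p}_A$; taking $K=0$ shows $f\in H^{\vec p}_A$, completing the series statement and hence the proof.

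The hard part will be the last step: legitimately passing from the distributional identity $f-S_K=\sum_{k>K}f_k$ in $\cs'$ to the \emph{pointwise} equality $(f-S_K)*\varphi_k(y)=\sum_{k'>K}f_{k'}*\varphi_k(y)$ for each fixed test function $\varphi_k(y-\cdot)$, and then controlling the supremum over the full family $\varphi\in\cs_N$, $k\in\zz$, $y\in x+B_k$ uniformly so as to obtain a genuine pointwise (indeed lower semicontinuous, measurable) majorant of $M_N(f-S_K)$. Everything else—the abstract completeness criterion, the quantitative embedding into $\cs'$, and the $\underline p$-subadditivity of $\|\cdot\|_{L^{\vec p}}$—is routine once Lemma \ref{r2.1} is in hand.
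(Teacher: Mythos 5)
The paper itself contains no proof of this lemma: it is imported verbatim from \cite{hlyy20}, so there is no internal argument to compare with. Your proof is correct and is, in substance, the standard completeness argument for quasi-normed Hardy-type spaces, which is also the route taken in the cited reference: $\underline{p}$-subadditivity of $\|\cdot\|_{H^{\vec{p}}_A}$ (note the endpoint $\gamma=\underline{p}=\min\{1,p_-\}$ is indeed admitted by Lemma \ref{r2.1}), a quantitative embedding $H^{\vec{p}}_A\hookrightarrow\cs'$ obtained by testing $M_N$ on the fixed dilated ball $B_0$, weak-$\ast$ sequential completeness of $\cs'$, and the pointwise majorization $M_N(f-S_K)\le\sum_{k>K}M_N(f_k)$ for the tails. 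Two small points deserve an explicit line in a written-out version. First, passing from the two-term inequality of Lemma \ref{r2.1} to the countable inequality
\begin{equation*}
\lf\|\sum_{k>K}M_N(f_k)\r\|^{\underline{p}}_{L^{\vec{p}}}\le\sum_{k>K}\lf\|M_N(f_k)\r\|^{\underline{p}}_{L^{\vec{p}}}
\end{equation*}
uses monotone convergence in each of the $n$ iterated integrals defining $\|\cdot\|_{L^{\vec{p}}}$, which is immediate because the partial sums increase pointwise. Second, the step you flag as the hard one is actually unproblematic: the $\cs'$-convergence is applied to one fixed test function $\varphi_k(y-\cdot)$ at a time, yielding $(f-S_K)*\varphi_k(y)=\sum_{k'>K}f_{k'}*\varphi_k(y)$ pointwise, and the resulting bound $\sum_{k'>K}M_N(f_{k'})(x)$ is a single measurable majorant independent of the parameters $(\varphi,k,y)$ over which the supremum defining $M_N(f-S_K)(x)$ is taken, so no measurability or uniformity issue arises. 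With these remarks your argument is complete and self-contained.
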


Now, we show Theorem \ref{t4.2} by borrowing some ideas from the proof of \cite[Theorem 8.4]{hlyy20}.
\begin{proof}[Proof of Theorem \ref{t4.2}]
Let $\vec{p}\in (0,\,\infty)^n$, $q\in(1,\,\infty]\cap(p_{+},\,\infty)$ with $p_+$ as in \eqref{e2.5}, and
$s\in[\lfloor(1/{p_-}-1) {\ln b/\ln \lambda_-}\rfloor,\,\infty)\cap\zz_+$ with $p_-$ as in \eqref{e2.5}.
Firstly, we show that \eqref{e4.4x} holds true for any $f\in H^{\vec{p},\,q,\,s}_{A,\,\mathrm{fin}}$. For any $f\in H^{\vec{p},\,q,\,s}_{A,\,\mathrm{fin}}$, by Definition \ref{d5.2}, we know that $f\in H^{\vec{p}}_{A}\cap L^q$.
From Lemma \ref{l4.1yx},
we know that there exist $\{\lambda_i\}_{i\in\nn}\subset\ccc$ and a sequence of
$(\vec{p},\,q,\,s)$-atoms, $\{a_i\}_{i\in\nn}$, supported, respectively,
on $\{{B^{(i)}}\}_{i\in\nn}\subset\mathfrak{B}$, where $B^{(i)}:=x_i+B_{\ell_i}$ with $x_i\in\rn$ and $\ell_i\in\zz$, such that
\begin{align*}
f=\sum_{i\in\nn}\lz_{i}a_i \ \ \mathrm{in\ } \ L^q,
\end{align*}
and
\begin{eqnarray}\label{e4.21}
 \lf\|\lf\{\sum_{i\in\nn} \lf[\frac{|\lambda_i|\chi_{{B^{(i)}}}}{\|\chi_{{B^{(i)}}}\|_{L^{\vec{p}}}}\r]
 ^{\eta}\r\}^{1/\eta}\r\|
_{L^{\vec{p}}}
\lesssim\|f\|_{H^{\vec{p}}_{A}}.
\end{eqnarray}
where $\eta\in(0,\,\min\{1,\,p_{-}\})$.
From this and the linear operator $T$ is bounded on $L^q$, we conclude that, for any $f\in H^{\vec{p},\,q,\,s}_{A,\,\mathrm{fin}}$,
$T(f)=\sum_{i\in\nn}\lz_{i}T(a_i)$ in $L^q$ and hence in $\cs'$.
Therefore, for all $N\in\mathbb{N}\cap[\lfloor({1/\underline{p}}-1)\ln b/\ln\lambda_{-}\rfloor+2,\,\infty)$,
\begin{align}\label{e4.52x}
&\lf\|M_{N}(T(f))\r\|_{L^{\vec{p}}}^{\underline{p}}
\leq\lf\|\sum_{i\in \nn}|\lambda_{i}|M_{N}(T(a_{i}))\r\|_{L^{\vec{p}}}^{\underline{p}}\\\nonumber
\leq&\lf\|\sum_{i\in \nn}|\lambda_{i}|M_{N}(T(a_{i}))\chi_{A^{2\sigma}B^{(i)}}\r\|_{L^{\vec{p}}}^{\underline{p}}+\lf\|\sum_{i\in \nn}|\lambda_{i}|M_{N}(T(a_{i}))\chi_{({A^{2\sigma}B^{(i)}})^{\complement}}\r\|_{L^{\vec{p}}}^{\underline{p}}\\\nonumber
\lesssim&\lf\|\lf\{\sum_{i\in \nn}\lf[|\lambda_{i}|M_{N}(T(a_{i}))\chi_{A^{2\sigma}B^{(i)}}\r]^
{\eta}\r\}^{1/\eta}\r\|_{L^{\vec{p}}}^{\underline{p}}+\lf\|\sum_{i\in \nn}|\lambda_{i}|M_{N}(T(a_{i}))\chi_{({A^{2\sigma}B^{(i)}})^{\complement}}\r\|_{L^{\vec{p}}}^{\underline{p}}\\\nonumber
=&:\mathrm{K_{1}}+\mathrm{K_{2}},\nonumber
\end{align}
where $A^{2\sigma}B^{(i)}$ is the $A^{2\sigma}$ concentric expanse on $B^{(i)}$, that is, $A^{2\sigma}B^{(i)}:=x_i+A^{2\sigma}B_{\ell_i}$,
 and $\underline{p}$ as in \eqref{e2.5.1}.

For ${\mathrm{K_1}}$, from the boundedness of $M_{N}$ and $T$ on $L^q$, and the size condition of $a_i$,
we know that
$$\lf\|M_{N}(T(a_i))\chi_{A^{2\sigma}B^{(i)}}\r\|_{L^q}
\lesssim\lf\|a_i\chi_{A^{2\sigma}B^{(i)}}\r\|_{L^q}\lesssim\frac{|B^{(i)}|^{1/q}}{\|\chi_{B^{(i)}}\|_{L^{\vec{p}}}}.$$
From this, Lemma \ref{l4.1x} and \eqref{e4.21}, we further deduce that
\begin{align*}
\mathrm{K_1}&\lesssim\lf\|\lf\{\sum_{i\in\nn} \lf[\frac{|\lambda_i|\chi_{{B^{(i)}}}}{\|\chi_{{B^{(i)}}}\|_{L^{\vec{p}}}}\r]
^{\eta}\r\}^{1/\eta}\r\|
_{L^{\vec{p}}}^{\underline{p}}
\lesssim\|f\|_{H^{\vec{p}}_{A}}^{\underline{p}}.
\end{align*}

To deal with ${\mathrm{K_2}}$, for any $i\in\nn$ and $x\in ({A^{2\sigma} B^{(i)}})^\complement$, by the condition of Theorem \ref{t4.2},  we see that, for any $(\vec{p},\,q,\,s)$-atom $a_i$ supported on a ball $B^{(i)}$,  $T(a_i)$ is a harmless constant multiple
of a $(\vec{p},\,q,\,s,\,\varepsilon)$-molecule associated with $B^{(i)}$, where
$\varepsilon>(s+1)\log_b^{\lambda_+}$. From this and \eqref{e3.12}, we know that
\begin{eqnarray}\label{e4.13}
M_N(T(a_i))(x)
\ls\lf\|\chi_{B^{(i)}}\r\|^{-1}_{L^{\vec{p}}}
\lf[M_{\mathrm{HL}}(\chi_{B^{(i)}})(x)\r]^\theta,
\end{eqnarray}
where
$\theta$  is as in \eqref{e3.12.1}.
By \eqref{e4.13} and an argument same as that used in the proof of \eqref{e3.10x}, we obtain
\begin{align*}
\mathrm{K_2}&\lesssim\lf\|\lf\{\sum_{i\in\nn} \lf[\frac{|\lambda_i|\chi_{{B^{(i)}}}}{\|\chi_{{B^{(i)}}}\|_{L^{\vec{p}}}}\r]
^{\eta}\r\}^{1/\eta}\r\|
_{L^{\vec{p}}}^{\underline{p}}
\lesssim\|f\|_{H^{\vec{p}}_{A}}^{\underline{p}}.
\end{align*}
Combining \eqref{e4.52x} and the estimates of $\mathrm{K_1}$ and $\mathrm{K_2}$, we further conclude that, for any $f\in H^{\vec{p},\,q,\,s}_{A,\,\mathrm{fin}}$,
$$\|T(f)\|_{H^{\vec{p}}_A}
\lesssim \|f\|_{H^{\vec{p}}_{A}}.$$

Next, we prove that \eqref{e4.4x} also holds true for any $f\in H^{\vec{p}}_{A}$.
Let $f\in H^{\vec{p}}_{A}$. By Lemma \ref{l4.0y} and the obvious density of $H^{\vec{p},\,q,\,s}_{A,\,\mathrm{fin}}$ in $H^{\vec{p}}_{A}$, we know that there exists a sequence
$\{f_j\}_{j\in {\mathbb{Z}_+}}\subset H^{\vec{p},\,q,\,s}_{A,\,\mathrm{fin}}$, such that $f_j \rightarrow f$ as $j\rightarrow\infty$ in $H^{\vec{p}}_A$.
Therefore, $\{f_j\}_{j\in {\mathbb{Z}_+}}$ is a Cauchy sequence in $H^{\vec{p}}_A$. By this, we see that, for any $j$, $k\in {\mathbb{Z}_+}$,
$$\|T(f_j)-T(f_k)\|_{H_A^{\vec{p}}}=\|T(f_j-f_k)\|_{H_A^{\vec{p}}}\lesssim\|f_j-f_k\|_{H^{\vec{p}}_A}.$$
Notice that $\{T(f_j)\}_{j\in\mathbb{Z}_+}$ is also a Cauchy sequence in $H_A^{\vec{p}}$. Applying Lemma \ref{l4.2xx}, we conclude that there exists a $g \in H_A^{\vec{p}}$ such that $T(f_j)\rightarrow g$ as $j\rightarrow\infty$ in $H_A^{\vec{p}}$.
Let $T(f):=g$. Then, $T(f)$ is well defined. In fact, for any other sequence $\{h_j\}_{j\in\mathbb{Z}_+}\subset H^{\vec{p},\,q,\,s}_{A,\,\mathrm{fin}}$ satisfying $h_j\rightarrow f$ as $j\rightarrow\infty$ in $H^{\vec{p}}_A$, by Lemma  \ref{r2.1}, we have
\begin{align*}
\|T(h_j)-T(f)\|_{{H^{\vec{p}}_A}}^{\underline{p}}&\leq\|T(h_j)-T(f_j)\|_{{H^{\vec{p}}_A}}^{\underline{p}}
+\|T(f_j)-g\|_{{H^{\vec{p}}_A}}^{\underline{p}}.\\
&\lesssim\|h_j-f_j\|_{H^{\vec{p}}_A}^{\underline{p}}+\|T(f_j)-g\|_{{H^{\vec{p}}_A}}^{\underline{p}}\\
&\lesssim\|h_j-f\|_{H^{\vec{p}}_A}^{\underline{p}}+\|f-f_j\|_{H^{\vec{p}}_A}^{\underline{p}}+\|T(f_j)-g\|_{{H^{\vec{p}}_A}}^{\underline{p}}\rightarrow 0 \,\,\mathrm{as}\,\, j\rightarrow 0,
\end{align*}
which is wished.

From this, we see that, for any $f\in H^{\vec{p}}_A$,
$$\|T(f)\|_{{H^{\vec{p}}_A}}=\|g\|_{{H^{\vec{p}}_A}}=\lim_{j\rightarrow\infty}\|T(f_j)\|_{{H^{\vec{p}}_A}}\lesssim\lim_{j\rightarrow\infty}\|f_j\|_{H^{\vec{p}}_A} \thicksim\|f\|_{H^{\vec{p}}_A},$$
which implies that \eqref{e4.4x} also holds true for any $f\in H^{\vec{p}}_{A}$ and hence completes the proof of Theorem \ref{t4.2}.
\end{proof}

\textbf{Acknowledgements.} The authors would like to express their
deep thanks to the referees for their very careful reading and useful
comments which do improve the presentation of this article.

\bigskip

\noindent
\medskip
\noindent  Aiting Wang
\medskip

\noindent
School of Mathematics and Statistics\\
Qinghai Nationalities University\\
 Xining
810000, Qinghai, China\\

\noindent

Wenhua Wang

\medskip

\noindent
School of Mathematics and Statistics\\
Wuhan University\\
 Wuhan
430072, Hubei, China

\smallskip

\noindent{E-mail }:\\
\texttt{wangwhmath@163.com} (Wenhua Wang)\\
\texttt{atwangmath@126.com} (Aiting Wang) \\

\bigskip \medskip


\begin{thebibliography}{30}




\vspace{-0.3cm}
\bibitem{b03}
M. Bownik, Anisotropic Hardy spaces and wavelets, {\it Mem. Amer. Math. Soc.}, {\bf164} (2003), 1-122.




\vspace{-0.3cm}
\bibitem{bp61}
A. Benedek and R. Panzone, The space $L^p$, with mixed norm, {\it Duke Math. J.}, {\bf 28} (1961),
301-324.
\vspace{-0.3cm}
\bibitem{cg20}
G. Cleanthous and A.G. Georgiadis, Mixed-norm $\alpha$-modulation
spaces, {\it Trans. Amer. Math. Soc.}, {\bf373} (2020), 3323-3356.
\vspace{-0.3cm}
\bibitem{cgn17}
G. Cleanthous, A.G. Georgiadis and M. Nielsen, Anisotropic mixed-norm Hardy spaces, {\it J.
Geom. Anal}., {\bf 27} (2017), 2758-2787.

\vspace{-0.3cm}
\bibitem{clr06}
Y. Chen, S. Levine and M. Rao, Variable exponent, linear growth functionals in image restoration, {\it SIAM J. Appl. Math.}, {\bf66} (2006), 1383-1406.
\vspace{-0.3cm}
\bibitem{cs20}
T. Chen and W. Sun, Iterated and mixed weak norms with applications to geometric inequalities, {\it J. Geom. Anal.}, {\bf30} (2020), 4268-4323.
\vspace{-0.3cm}
\bibitem{cs21}
T. Chen and W. Sun, Extension of multilinear fractional integral operators to linear operators on mixed-norm Lebesgue spaces, {\it Math.
Ann.}, {\bf379} (2021), 1089-1172.

\vspace{-0.3cm}
\bibitem{dz20}
W. Ding and Y. Zhu, Mixed Hardy spaces and their applications, {\it Acta Math. Sci. Ser. B (Engl. Ed.)},  {\bf40}  (2020),  945-969.

















\vspace{-0.3cm}
\bibitem{fs72}
C. Fefferman and E.M. Stein, $H^p$ spaces of several variables, {\it Acta Math.}, {\bf129} (1972), 137-193.








\vspace{-0.3cm}
\bibitem{hcy21}
L. Huang, D.-C. Chang and D. Yang, Fourier transform of anisotropic
mixed-norm Hardy spaces, {\it Front. Math. China}, {\bf16} (2021), 119-
139.
\vspace{-0.3cm}
\bibitem{hlyy19}
L. Huang, J. Liu, D. Yang and W. Yuan, Atomic and Littlewood-Paley characterizations
of anisotropic mixed-norm Hardy spaces and their applications, {\it J. Geom. Anal.}, {\bf 29} (2019),
1991-2067.
\vspace{-0.3cm}
\bibitem{hlyy19y}
L. Huang, J. Liu, D. Yang and W. Yuan, Dual spaces of anisotropic
mixed-norm Hardy spaces, {\it Proc. Amer. Math. Soc.}, {\bf147} (2019),
1201-1215.
\vspace{-0.3cm}
\bibitem{hlyy20}
L. Huang, J. Liu, D. Yang and W. Yuan, Real-Variable characterizations of new anisotropic mixed-norm
Hardy spaces, {\it Commun. Pure Appl. Anal.}, {\bf 19} (2020), 3033-3082.

\vspace{-0.3cm}
\bibitem{hlyy20x}
L. Huang, J. Liu, D. Yang and W. Yuan,
Identification of anisotropic mixed-norm Hardy spaces
and certain homogeneous Triebel-Lizorkin spaces, {\it J. Approx. Theory}, {\bf 258} (2020), 1-27.

\vspace{-0.3cm}
\bibitem{hwyy21}
L. Huang, F. Weisz, D. Yang and W. Yuan, Summability of Fourier
transforms on mixed-norm Lebesgue spaces via associated Herz
spaces, {\it Anal. Appl. (Singap.)}, (2021), DOI: 10.1142/S02195305215 00135.
\vspace{-0.3cm}
\bibitem{hy21}
L. Huang and D. Yang, On function spaces with mixed norms-a
survey, {\it J. Math. Study}, {\bf54} (2021), 262-336.
\vspace{-0.3cm}
\bibitem{l07}
 L. Liu, Hardy spaces via distribution spaces, {\it Front. Math. China}, {\bf2} (2007), 599-611.
\vspace{-0.3cm}
\bibitem{lyy17x}
J. Liu, F. Weisz, D. Yang and W. Yuan, Variable anisotropic Hardy spaces and their appliations,
{\it Taiwanese J. Math.}, {\bf22} (2017), 1173-1216.




\vspace{-0.3cm}
\bibitem{lyy16}
J. Liu, D. Yang and W. Yuan, Anisotropic Hardy-Lorentz spaces and their applications, {\it Sci. China Math.}, {\bf59} (2016), 1669-1720.

\vspace{-0.3cm}
\bibitem{lyy17}
J. Liu, D. Yang and W. Yuan, Anisotropic variable Hardy-Lorentz spaces and their real interpolation,
{\it J. Math. Anal. Appl.}, {\bf456} (2017), 356-393.











\vspace{-0.3cm}
\bibitem{s12}
Y. Sawano, Atomic decompositions of Hardy space with variable exponent and its application to bounded linear operators, {\it Integral Equations Operator Theory}, {\bf77} (2013), 123-148.



\vspace{-0.3cm}
\bibitem{s60}
E.M. Stein and G. Weiss, On the theory of harmonic functions of several variables. I. The theory of $H^p$-spaces, {\it Acta Math.}, {\bf103} (1960), 25-62.


\vspace{-0.3cm}
\bibitem{t19}
J. Tan,
Atomic decompositions of localized Hardy spaces with
variable exponents and applications, {\it J. Geom. Anal.}, {\bf 29} (2019), 799-827.

\vspace{-0.3cm}
\bibitem{t17}
L. Tang, $L^{p(\cdot),\,\lambda(\cdot)}$ regularity for fully nonlinear elliptic equations, {\it Nonlinear Anal.}, {\bf149} (2017), 117-129.


\vspace{-0.3cm}
\bibitem{wl12}
H. Wang and Z. Liu, The Herz-type Hardy spaces with variable exponent and
 their applications, {\it Taiwanese J. Math.}, {\bf16} (2012), 1363-1389.

\vspace{-0.3cm}
\bibitem{zy16}
C. Zhuo, D. Yang and Y. Liang, Intrinsic square function characterization of Hardy spaces with variable exponents, {\it Bull. Malays. Math. Sci.}, {\bf39} (2016), 1541-1577.


\end{thebibliography}
\end{document}